\newcommand{\mb}{\mathbb} 
\newcommand{\CSP}{\textsc{Continuous Skolem Problem}\mbox{}}
\newtheorem{thm}{Theorem}
\newtheorem{lem}[thm]{Lemma}
\newtheorem{open}[thm]{Open Problem}
\newcommand{\C}{\mathbb C}
\newcommand{\R}{\mathbb R}
\newcommand{\PR}{\mathbb{R}_{\geq 0}} 
\newcommand{\N}{\mathbb N}
\newcommand{\Z}{\mathbb Z}
\newcommand{\Q}{\mathbb Q}
\begin{document}

\begin{frontmatter}

\title{The Continuous Skolem-Pisot Problem: On the Complexity of Reachability
for Linear Ordinary Differential Equations}

\author[]{Paul C. Bell\corref{cor1}}
\ead{p.c.bell@liverpool.ac.uk}

\author[]{Jean-Charles Delvenne}
\ead{jean-charles.delvenne@uclouvain.be}

\author[]{Rapha\"el M. Jungers}
\ead{raphael.jungers@uclouvain.be}

\author[]{Vincent D. Blondel}
\ead{vincent.blondel@uclouvain.be}

\cortext[cor1]{Corresponding Author}

\address{Department of Mathematical Engineering, Centre CESAME,\\
Universit\'e catholique de Louvain,
B\^atiment Euler,
Avenue Georges Lema\^itre, 4,\\
B-1348 Louvain-la-Neuve, Belgium}

\begin{keyword}
Skolem-Pisot problem \sep Exponential polynomials \sep
Continuous time dynamical system \sep Decidability \sep Ordinary
differential equations
\end{keyword}

\begin{abstract}
We study decidability and complexity questions related to a continuous analogue
of the Skolem-Pisot problem concerning the zeros and nonnegativity of a linear
recurrent sequence. In particular, we show that the continuous version of the
nonnegativity problem is NP-hard in general and we show that the presence of a
zero is decidable for several subcases, including instances of depth two or less,
although the decidability in general is left open. The problems may also be
stated as reachability problems related to real zeros of \emph{exponential
polynomials} or solutions to initial value problems of linear differential
equations, which are interesting problems in their own right.
\end{abstract}

\end{frontmatter}

\section{Introduction}\label{intro}

Skolem's problem (also known in the literature as Pisot's problem) asks whether it
is algorithmically decidable if a given linear recurrent sequence (LRS) has a zero
or not. A LRS may be written in the form:
$$ u_k = a_{n-1}u_{k-1} + a_{n-2}u_{k-2} + \cdots + a_0u_{k-n},
$$
for $k \geq n$ where $u_0, u_1, \ldots, u_{n-1} \in \Z$ are
the initial inputs and $a_0, a_1, \ldots, a_{n-1} \in \Z$
are the recurrence coefficients, see also \cite{Ev03}. This forms the infinite sequence
$(u_k)_{k = 0}^{\infty} \subseteq \Z$. We may assume $a_0$ is nonzero,
otherwise a shorter and equivalent recurrence exists. Such a
recurrence sequence is said to be of \emph{depth} $n$.

For a linear recurrent sequence $u = (u_k)_{k = 0}^{\infty} \subseteq \Z$
the \emph{zero set} of $u$ is given by $Z(u) = \{i \in \N| u_i = 0\}$.
One of the first results concerning the zeros of LRS's was by
T. Skolem in \cite{Sk}, when he proved
that the zero set is semilinear (i.e., the union of finitely many periodic sets
and a finite set). This result was also later shown by K. Mahler \cite{Ma}
and C. Lech \cite{Le} and is now often referred to as the
\emph{Skolem-Mahler-Lech} theorem.
It is known that determining if $Z(u)$ is an infinite set is
decidable as was proven by Berstel and Mignotte \cite{BM}.

It was shown by N. Vereshchagin in 1985 that Skolem's problem
(i.e., the problem ``is the zero set of a LRS empty?'') is decidable
when the depth of the linear recurrent sequence is less than or equal to
four in \cite{V85}. It was also recently shown that Skolem's problem is
decidable for depth five in \cite{HHHK}, but the general decidability
status is open. It is also known that determining if a given linear
recurrent sequence has a zero is NP-hard, see \cite{BP02}.

Note that we may always encode a linear recurrent sequence of depth $n$
into an integral matrix $A \in \Z^{(n+1) \times (n+1)}$ such that
$u_k = A^k_{1,n+1}$ for $k \geq 1$. This follows since given the initial vector
$u = (u_0, u_1, \ldots, u_{n-1})^T$ and the recurrence coefficients,
$a_0, a_1, \ldots, a_{n-1}$, we first define matrix
$A' \in \Z^{n \times n}$:
$$
A' = \left( \begin{array}{ccccc}
 0 & 1 & 0 & \cdots & 0 \\ 0 & 0 & 1 & \cdots & 0 \\
 \vdots & \vdots & \ddots & \ddots & \vdots \\
 0 & 0 & 0 & \cdots & 1 \\ a_0 & a_1 & a_2 & \cdots & a_{n-1} \\
 \end{array}\right).
$$
Note that $(A')^ku = (u_k, u_{k+1}, \ldots, u_{k+n-1})$. Now we shall
extend this matrix by 1 dimension to give:
$$
A = \left( \begin{array}{cc} A' & A'v \\ \overline{0} & 0
\end{array}\right) \in \Z^{(n+1) \times (n+1)},
$$
where $\overline{0}$ is the zero vector of appropriate size. It is not
difficult to now see that $u_k = A^k_{1,n}$ for $k \geq 1$ as required.
Skolem's problem in this context is therefore to determine if the upper right
entry of a positive power of an integral matrix is zero. More generally, one can 
show that Skolem's problem is equivalent to the following problem: given a matrix 
$A\in \mb{F}^{n \times n}$ and two vectors $c,x_0 \in \mb{F}^{n},$ is there a 
nonnegative integer $t$ such that $c^TA^tx_0=0$? We add that a generalization of 
this problem where we may take any product of two integral matrices of dimension 
$10$ is known to be undecidable, see \cite{HH07}.

In this paper we shall consider a dynamical system whose updating
trajectory is given by $\frac{dx(t)}{dt} = Ax(t)$ where $A \in \R^{n \times n}$
and the initial point $x(0) \in \R^n$ is given. We shall
be interested in determining whether this trajectory ever reaches
a given hyperplane, thus the problem is equivalent to determining
if there exists $t \in \PR$ such that $c^T \exp(At)x(0) = 0$ where
$c \in \R^n$ defines the hyperplane. We consider this as
the Skolem-Pisot problem in continuous time. We show that for
instances of size two or less this problem is decidable.

We shall also show that determining if $c^T \exp(At)x(0)$ reaches
zero is \emph{computationally equivalent} to determining whether
a given real-valued exponential polynomial
$f(z) = \sum_{j = 1}^{m} P_j(z)\exp(\theta_jz)$, where each
$P_j$ is a polynomial, ever reaches zero for a \emph{positive real value}.
This is also equivalent to determining if the solution $y(t)$ of
an ordinary differential equation
$y^{(k)} + a_{k-1}y^{(k-1)} + \ldots + a_0y = 0$ with given initial
conditions $y^{(k-1)}(0), y^{(k-2)}(0), \ldots, y(0)$ ever reaches zero.

From 1920, P\'{o}lya and others characterized the asymptotic distribution
of complex zeros of exponential polynomials
\cite{Mo73, Po, Sch25, Ta25, vdP75, Wi17}. Upper bounds were also found
on the number of zeros in a finite region of the complex plane, using
the argument principle. Less is known about real zeros. Upper and lower
bounds on the number of zeros in a real interval are given in \cite{Vo76}.
A formula for the asymptotic density of real zeros for a restricted class
of exponential polynomials was found in \cite{Ka43}. Some observations on
the first sign change of a sum of cosines are collected in \cite{NS82}.
However, no criterion has been proposed to check the existence of a real
zero for a real exponential polynomial.

A related problem, determining whether a given linear recurrent sequence has only
nonnegative terms, the \emph{nonnegativity problem}, is decidable
for dimension $2$, see \cite{HHH}. The authors note that if the
nonnegativity problem is decidable in general, it implies Skolem's
problem is decidable. This follows since if $(u_k)_{k = 0}^{\infty}$
is recurrent, then so is $(u_k^2-1)_{k = 0}^{\infty}$.

We may note that using the linear recurrent sequence
$(u_k)_{k = 0}^{\infty}$ from the proof of NP-hardness of Skolem's
problem in \cite{BP02}, and converting it to the form
$(u_k^2-1)_{k = 0}^{\infty}$, allows one to easily derive the following
result:
\begin{thm}
It is NP-hard to decide if a given linear recurrent sequence is
nonnegative, i.e., the nonnegativity problem is NP-hard.
\end{thm}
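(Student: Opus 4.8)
The plan is to compose the NP-hardness reduction for Skolem's problem from \cite{BP02} with the map $u = (u_k)_{k\ge 0} \mapsto \widehat u = (u_k^2 - 1)_{k\ge 0}$, using the fact that this map converts the emptiness of the zero set of an integer LRS into the nonnegativity of another integer LRS. Let $\Pi$ be an NP-complete problem and let $w \mapsto u^{(w)}$ be the polynomial-time reduction of \cite{BP02} producing, from each instance $w$, an integer linear recurrent sequence $u^{(w)}$ such that $w$ is a positive instance of $\Pi$ if and only if $u^{(w)}$ has no zero (if \cite{BP02} is phrased with the opposite polarity, replace $\Pi$ by its complement; the argument then yields co-NP-hardness of the nonnegativity problem, which still rules out a polynomial-time algorithm unless $\mathrm{P} = \mathrm{NP}$). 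Since every term $u_k^{(w)}$ is an integer, one has $u_k^{(w)} \ne 0$ for all $k$ exactly when $(u_k^{(w)})^2 \ge 1$ for all $k$, i.e. exactly when $\widehat u^{(w)}$ is nonnegative. Hence $\widehat u^{(w)}$ is nonnegative precisely when $w$ is a positive instance of $\Pi$.

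The one step requiring care is that $w \mapsto \widehat u^{(w)}$ is computable in polynomial time, i.e. that $\widehat u^{(w)}$ is presented by a recurrence of polynomially bounded order with integer coefficients and initial values of polynomially bounded bit-length. Let $p(x) = \prod_{i=1}^n (x-\lambda_i) \in \Z[x]$ be the characteristic polynomial associated with $u^{(w)}$. Then $(u_k^2)_{k \ge 0}$ satisfies the linear recurrence whose characteristic polynomial is the composed product $q(x) = \prod_{i,j}(x - \lambda_i \lambda_j)$, a polynomial of degree $n^2$ that lies in $\Z[x]$ and is computable in polynomial time from $p$ as a resultant, with coefficients of polynomially bounded bit-length; multiplying $q$ by $(x-1)$ accounts for the additive constant and produces a characteristic polynomial for $\widehat u^{(w)}$ of degree $n^2+1$ with the same size guarantees. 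The needed initial terms $(u_k^{(w)})^2 - 1$ for $0 \le k \le n^2$ are obtained by running the original recurrence for $O(n^2)$ steps. This is just the effective form of the closure property already invoked in the text: the $\Z$-valued linear recurrent sequences are closed under pointwise products and under the addition of integer constants.

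Putting the pieces together, the composite map $w \mapsto \widehat u^{(w)}$ is a polynomial-time reduction from $\Pi$ to the nonnegativity problem, so the latter is NP-hard. I expect the only real work to be the second step --- tracking the order and coefficient growth of the recurrence satisfied by $(u_k^2 - 1)_{k\ge 0}$ --- and even that is routine once the composed-product description is in hand; all of the conceptual content lies in the elementary observation that for an integer $u_k$ one has $u_k^2 \ge 1$ if and only if $u_k \ne 0$.
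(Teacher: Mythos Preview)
Your proposal is correct and follows essentially the same approach as the paper: take the LRS $(u_k)$ from the Blondel--Portier NP-hardness reduction and pass to $(u_k^2-1)$, noting that this preserves polynomial size (the paper phrases the size bound via the matrix representation, going from $n\times n$ to $(n^2+1)\times(n^2+1)$, which is the same $n^2+1$ you obtain from the composed-product characteristic polynomial). Your caveat about the polarity of the reduction, and the resulting NP- versus co-NP-hardness, is a point of care the paper glosses over.
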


This holds since if $(u_k)_{k = 0}^{\infty}$ is represented by a
matrix $\Z^{n \times n}$, then $(u_k^2-1)_{k = 0}^{\infty}$ may
be represented by a matrix $\Z^{(n^2+1) \times (n^2+1)}$ and thus
we have a polynomial time reduction. In this paper we show that the
nonnegativity problem in the continuous setting is also NP-hard.

Given a matrix $M \in \R^{n \times n}$ and vectors $u,v \in \R^n$,
the \emph{orbit problem} asks if there exists a power $k \in \N$ such
that $M^ku=v$. Thus it is a type of \emph{reachability problem}, see \cite{BT00}. This was shown
to be decidable even in polynomial time, see \cite{KL86}. The corresponding
version of this problem for continuous time asks whether for a given
$M \in \R^{n \times n}$ and vectors $a,b \in \R^n$ there exists some
$t \in \PR$ such that $\exp(Mt)a = b$. This problem was proved to be decidable
in \cite{Ha08}.

\section{Preliminaries}\label{PrelimSec}

Let $A \in \mb{F}^{n \times n}$ denote an $n \times n$ matrix
over the field $\mb{F}$ and $\sigma(A)$ the set of
eigenvalues of $A$. For a complex number $z \in \mathbb{C}$ we denote
by $\Re(z)$ the \emph{real} part of $z$ and by $\Im(z)$ the
\emph{imaginary} part of $z$. We use the notation $\PR$
to denote the nonnegative real numbers.

We shall denote an \emph{exponential polynomial}
$f:\C \to \C$ by a sum of the form:
$
f(z) = \sum_{j = 1}^{m} P_j(z)\exp(\theta_jz),
$
where $P_j \in \C[X]$ and $\theta_j \in \C$.

Given a matrix $A \in \C^{n \times n}$ we shall denote by the
\emph{dominant eigenvalues of $A$} the set of eigenvalues of $A$ with
maximum real part, i.e.,
$$
\{\theta \in \sigma(A)| \Re(\theta) \geq \Re(\theta'), \theta' \in \sigma(A)\}.
$$

We will later require the following theorem from Diophantine approximation \cite{Bk66}:

\begin{thm} \label{TheoBaker} (\textsc{Baker})
Let $\alpha_1, \ldots, \alpha_k, \beta_0, \ldots, \beta_k$
be algebraic numbers. Then the combination

$$\Lambda=\beta_0 + \sum_i \beta_i \ln \alpha_i$$
is either zero or satisfies $|\Lambda|>h^{-N}$, where $h$ is the largest height of 
$\beta_1, \ldots, \beta_k$, and $N$ is a computable constant depending only on 
$\ln \alpha_1, \ldots, \ln \alpha_k$ and the maximum degree of $\beta_0, \ldots, \beta_k$.
\end{thm}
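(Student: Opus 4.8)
The plan is to establish the lower bound by the classical method of auxiliary functions from transcendence theory; since the statement is precisely Baker's theorem on linear forms in logarithms, I will only indicate the structure of the argument rather than reproduce it in full. Fix a determination of each logarithm and suppose, for contradiction, that $\Lambda \neq 0$ while $|\Lambda| \leq h^{-N}$ for a computable $N$ to be pinned down at the end. After relabelling one may assume $\beta_k \neq 0$, so that
\[
\ln\alpha_k \;=\; -\,\beta_k^{-1}\left(\beta_0 + \sum_{i<k}\beta_i\ln\alpha_i\right) \;+\; \beta_k^{-1}\Lambda ,
\]
i.e. one of the logarithms equals an algebraic combination of the others up to the tiny error $\beta_k^{-1}\Lambda$.

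The first step is to construct an auxiliary function
\[
\Phi(z) \;=\; \sum_{\lambda} c(\lambda)\; z^{\lambda_0}\, e^{\,z(\lambda_1\ln\alpha_1 + \cdots + \lambda_{k-1}\ln\alpha_{k-1})},
\]
where $\lambda=(\lambda_0,\dots,\lambda_{k-1})$ runs over a box $0\le\lambda_i\le L_i$ and the $c(\lambda)$ are rational integers, not all zero. Using Siegel's Lemma --- pigeonhole applied to a homogeneous linear system whose coefficients are algebraic numbers of controlled degree and height, the heights of $\beta_1,\dots,\beta_k$ entering only linearly --- one chooses the $c(\lambda)$ so that $\Phi$ and its first $M$ derivatives all vanish at the integers $z=1,2,\dots,L$, with the parameters $L_0,\dots,L_{k-1},M,L$ balanced so the system is underdetermined. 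The displayed relation is exactly what lets a function of $k-1$ exponential variables imitate one of $k$ variables.

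The second and hardest step is the extrapolation. Starting from vanishing on $\{1,\dots,L\}$ to order $M$, one alternates the maximum modulus principle on expanding discs with the arithmetic fact that any nonzero value $\Phi^{(j)}(\ell)$ is an algebraic number of bounded degree and height and so cannot be smaller than an explicit (Liouville-type) bound; this forces $\Phi$ to vanish to comparable order on a much larger set of integers, and iterating, $\Phi$ together with many derivatives vanishes at so many points that a Vandermonde/zero-estimate argument extracts a nontrivial multiplicative relation among the $e^{z\ln\alpha_i}$ --- impossible, unless in fact $\Lambda=0$. The smallness of $\Lambda$ is used precisely here: throughout the extrapolation the discrepancy between $e^{z\ln\alpha_k}$ and its algebraic surrogate is $O(|\Lambda|)$ times controlled factors, so the hypothesis $|\Lambda|\le h^{-N}$ is what keeps the analytic estimates on the favourable side of the arithmetic inequality. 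Pushing the numerology through yields the explicit exponent $N$, computable from $\ln\alpha_1,\dots,\ln\alpha_k$ and $\max_i\deg\beta_i$, with all dependence on $\beta_1,\dots,\beta_k$ absorbed into the single factor $h^{-N}$. I expect the genuine difficulty to lie in two places: selecting the parameters so that Siegel's Lemma applies while the zero estimate still holds, and tracking every constant so that $N$ comes out \emph{effective} --- which is the whole reason for invoking Baker's method rather than an ineffective Thue--Siegel--Roth-type bound.
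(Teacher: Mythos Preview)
The paper does not prove this statement at all: Theorem~\ref{TheoBaker} is quoted from the literature (Baker's original papers, reference~\cite{Bk66}) as a black-box tool, with no argument given. Your sketch is a reasonable high-level outline of Baker's method --- auxiliary function built via Siegel's Lemma, extrapolation alternating analytic growth bounds with Liouville-type lower bounds, and a final zero estimate --- but for the purposes of this paper no such proof is expected; the theorem is simply invoked. So there is nothing to compare: you have supplied (in outline) what the paper deliberately omits by citation.
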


Recall that for an algebraic number $\beta$ with minimal polynomial 
$$p(x) = \sum_{0\leq i \leq d} a_i x^i,$$ its degree is $d$ and its height is $\max{|a_i|}$.
We shall also use the following theorem regarding the transcendence degree of the field 
extension of algebraic numbers when considering their exponentials:

\begin{thm}\label{HLThm} (\textsc{Hermite-Lindemann}) - Let
$\alpha_j, \lambda_j \in \C$ for $0 \leq j \leq n-1$ be
algebraic numbers such that no $\alpha_j = 0$ and each $\lambda_j$
is distinct. Then:
$$
\sum_{j = 0}^{n-1} \alpha_j e^{\lambda_j} \neq 0.
$$
\end{thm}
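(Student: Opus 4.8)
This classical result (the Lindemann--Weierstrass theorem) admits the standard auxiliary-integral argument --- the one proving the transcendence of $e$ and of $\pi$ --- carried out with algebraic rather than rational data, and that is the route I would take. Argue by contradiction, assuming $\sum_{j=0}^{n-1}\alpha_j e^{\lambda_j}=0$ and, after multiplying through by a common denominator, that the $\alpha_j$ are algebraic integers. The first step is a symmetrization over conjugates. Let $K$ be a number field containing all the $\alpha_j$ and $\lambda_j$ and Galois over $\Q$, with embeddings $\tau_1=\mathrm{id},\tau_2,\dots,\tau_D$ into $\C$; then
$$\prod_{r=1}^{D}\Bigl(\,\sum_{j=0}^{n-1}\tau_r(\alpha_j)\,e^{\tau_r(\lambda_j)}\Bigr)=0,$$
since its $r=1$ factor is the hypothesized zero. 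Expanding over all choice functions $\phi\colon\{1,\dots,D\}\to\{0,\dots,n-1\}$ and collecting terms with equal exponent $\sum_{r}\tau_r(\lambda_{\phi(r)})$, one gets a relation $\sum_{t}b_t e^{\gamma_t}=0$ in which the distinct exponents $\gamma_t$ form a set stable under $\mathrm{Gal}(\overline{\Q}/\Q)$, the coefficients $b_t$ are algebraic integers, and the pairs $(\gamma_t,b_t)$ are permuted compatibly by the Galois action. The delicate point is that not every $b_t$ may collapse to zero under the collection: the exponent $\gamma^{\ast}$ obtained when each $\tau_r$ selects the summand of largest real part (ties broken by imaginary part) arises from a unique choice function, so its coefficient is a nonzero product of conjugates of the $\alpha_j$. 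Discarding the vanishing coefficients, one reaches a relation $b_0+\sum_{k=1}^{N}b_k e^{\beta_k}=0$ with the $\beta_k$ distinct nonzero algebraic numbers forming a conjugation-closed set, $b_0\in\Z$, and not all of $b_0,\dots,b_N$ zero; I would present the principal case $b_0\neq 0$ with a single auxiliary polynomial, the case $b_0=0$ calling for the usual device of a family of auxiliary polynomials (one for each $\beta_k$ playing the distinguished role), which I consider part of the bookkeeping of this step.

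For the core estimate, fix $\ell\in\N$ with every $\ell\beta_k$ an algebraic integer, take a large prime $p$, and put
$$f_p(x)=\frac{\ell^{Np}\,x^{p-1}\prod_{k=1}^{N}(x-\beta_k)^{p}}{(p-1)!},\qquad I(s)=\int_{0}^{s}e^{s-u}f_p(u)\,du,$$
so that repeated integration by parts yields $I(s)=e^{s}F_p(0)-F_p(s)$ with $F_p=\sum_{r\ge 0}f_p^{(r)}$. Setting $J=J_p=\sum_{k=1}^{N}b_k I(\beta_k)$ and using the relation to cancel the exponentials gives $J=-\,b_0F_p(0)-\sum_{k=1}^{N}b_kF_p(\beta_k)$. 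Two arithmetic facts, both consequences of the conjugation-closedness secured above, now enter: since $\ell^{N}\prod_k(x-\beta_k)$ has rational-integer coefficients, $(p-1)!\,f_p\in\Z[x]$, so $f_p^{(r)}(0)=0$ for $r<p-1$, $f_p^{(r)}(0)\in p\Z$ for $r>p-1$, and $f_p^{(p-1)}(0)=c^{p}$ where $c=\ell^{N}\prod_{k}(-\beta_k)$ is a nonzero rational integer; and $f_p^{(r)}(\beta_k)=0$ for $r<p$, while for $r\ge p$ the sum $\sum_k b_k f_p^{(r)}(\beta_k)$ is a Galois-invariant sum of algebraic integers, hence a rational integer, and is divisible by $p$. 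Therefore $J\in\Z$ and $J\equiv-\,b_0c^{p}\pmod p$, so $J\neq 0$ as soon as $p>\max(|b_0|,|c|)$. On the other hand, the crude bounds $|I(\beta_k)|\le|\beta_k|\,e^{|\beta_k|}\max_{|u|\le|\beta_k|}|f_p(u)|$ and $\max_{|u|\le R}|f_p(u)|=O\bigl(C^{p}/(p-1)!\bigr)$ (for any fixed $R$, with $C$ depending on $R$ but not on $p$) give $|J_p|\to0$ as $p\to\infty$. For $p$ large enough this yields $0<|J_p|<1$ with $J_p\in\Z$, the desired contradiction.

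I expect the genuine obstacle to be the first step: arranging the passage to conjugates so that the exponent set becomes conjugation-closed, the coefficients transform compatibly under the Galois action and can be taken to be algebraic integers, the distinguished point feeding the $\bmod\ p$ nonvanishing is present, and --- most delicately --- not all coefficients cancel under the symmetrization. Once that reduction is in hand, the two-sided squeeze $0<|J_p|<1$ is the familiar elementary estimate. Since only the statement of the theorem is used in the sequel, no quantitative refinement is required.
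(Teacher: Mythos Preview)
Your proof sketch is a faithful outline of the classical Lindemann--Weierstrass argument and is correct in its essentials. However, the paper does not prove this theorem at all: it is stated in the preliminaries as a known classical result (attributed to Hermite--Lindemann) and used as a black box, with no proof given. So there is nothing to compare against; your write-up goes well beyond what the paper requires, and in a context like this one would normally just cite a reference rather than reproduce the proof.
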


The following theorem concerns
simultaneous Diophantine approximation of algebraic numbers
which are linearly independent over the rationals.

\begin{thm}\label{KronThm} (\textsc{Kronecker}, see \cite{Ca57})
Let $1,\lambda_1, \lambda_2, \ldots, \lambda_n \in \R$ be real
algebraic numbers which are linearly independent over $\Q$.
Then for any $\alpha \in \R^n$ and $\epsilon > 0$, there
exists $p \in \Z^n$ and $k \in \N$ such that
$|(k\lambda_i - \alpha_i - p_i)| < \epsilon$ for all
$1 \leq i \leq n$.
\end{thm}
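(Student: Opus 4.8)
The statement is the classical Kronecker simultaneous approximation theorem, and the plan is to deduce it from the density of the ``orbit'' $\{v_k : k \geq 1\}$ in the $n$-dimensional torus $\mathbb{T}^n = (\R/\Z)^n$, where $v_k$ denotes the image of $(k\lambda_1, k\lambda_2, \ldots, k\lambda_n)$ modulo $\Z^n$. Indeed, once density is known, for any target $\alpha \in \R^n$ and any $\epsilon > 0$ there is a $k \in \N$ whose point $v_k$ lies within $\epsilon$ of $\alpha \bmod \Z^n$ in the sup-metric on $\mathbb{T}^n$; choosing, for each coordinate, the integer $p_i$ realizing this proximity yields $|k\lambda_i - \alpha_i - p_i| < \epsilon$ for all $i$, which is exactly the conclusion. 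It is worth observing that for this particular statement only the $\Q$-linear independence of $1, \lambda_1, \ldots, \lambda_n$ is used; algebraicity of the $\lambda_i$ plays no role here (it matters elsewhere in the paper, through Baker's theorem, for effectivity rather than for mere existence).

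To prove density I would invoke Weyl's equidistribution criterion: the sequence $(v_k)_{k \geq 1}$ is equidistributed in $\mathbb{T}^n$ with respect to Lebesgue measure --- hence in particular dense --- provided that for every nonzero $m = (m_1, \ldots, m_n) \in \Z^n$ one has $\frac{1}{N} \sum_{k=1}^{N} e^{2\pi i \langle m, v_k \rangle} \to 0$ as $N \to \infty$. Writing $\beta := m_1\lambda_1 + \cdots + m_n\lambda_n$, we have $\langle m, v_k\rangle \equiv k\beta \pmod 1$, so the exponential sum is the geometric sum $\sum_{k=1}^{N} e^{2\pi i k\beta}$. The decisive point is that $\beta \notin \Q$: if $\beta = a/b$ with $a, b \in \Z$ and $b \geq 1$, then $b m_1\lambda_1 + \cdots + b m_n\lambda_n - a = 0$ would be a nontrivial rational linear dependence among $1, \lambda_1, \ldots, \lambda_n$ (nontrivial since $m \neq 0$), contradicting the hypothesis.

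Since $\beta$ is irrational, $e^{2\pi i \beta} \neq 1$, and therefore $\bigl| \sum_{k=1}^{N} e^{2\pi i k \beta} \bigr| = \bigl| e^{2\pi i\beta}(e^{2\pi i N\beta} - 1)/(e^{2\pi i\beta} - 1) \bigr| \leq 2/|e^{2\pi i\beta} - 1|$, a bound independent of $N$; dividing by $N$ and letting $N \to \infty$ gives the limit $0$ required by Weyl's criterion. Hence $(v_k)_{k\geq 1}$ is dense in $\mathbb{T}^n$ and the theorem follows. The only step carrying any content is the verification of Weyl's criterion itself; if one prefers to avoid quoting it, one can instead argue by Pontryagin duality that the closure $H$ of the subgroup $\{k \cdot (\lambda_1, \ldots, \lambda_n) \bmod \Z^n : k \in \Z\}$ of $\mathbb{T}^n$ has trivial annihilator in the character group $\Z^n$ --- again precisely because $\langle m, \lambda\rangle \notin \Z$ for every nonzero $m$ --- so that $H = \mathbb{T}^n$. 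Either way the entire argument funnels down to the single elementary observation that $\Q$-linear independence of $1, \lambda_1, \ldots, \lambda_n$ forces every nonzero integer combination of the $\lambda_i$ to be irrational.
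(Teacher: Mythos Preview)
Your proof is correct, but there is nothing to compare it against: the paper does not prove this theorem. Kronecker's theorem is stated in the preliminaries with a citation to Cassels~\cite{Ca57} and is then used as a black box (in the proof of Lemma~\ref{lemm_g} and in the NP-hardness argument). So you have supplied a proof where the paper simply quotes the literature.

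Your argument via Weyl's equidistribution criterion is the standard modern route and is carried out cleanly. Your side remark that algebraicity of the $\lambda_i$ is irrelevant for the statement as written is also correct and worth making: Kronecker's theorem needs only $\Q$-linear independence of $1,\lambda_1,\ldots,\lambda_n$, and the paper's restriction to algebraic $\lambda_i$ reflects the ambient setting (algebraic data, effective bounds via Baker) rather than any requirement of Kronecker's theorem itself. One small caution on your alternative Pontryagin-duality sketch: that argument most directly yields density of the two-sided orbit $\{k\lambda \bmod \Z^n : k\in\Z\}$, and passing to $k\in\N$ needs an extra line (e.g.\ using compactness to find arbitrarily large positive returns near $0$). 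Your primary Weyl argument avoids this issue entirely, since equidistribution of $(v_k)_{k\ge 1}$ already gives density of the forward orbit.
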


\section{Skolem's Problem in Continuous Time}

We shall consider continuous time systems governed by the rule
$\frac{dx(t)}{dt} = Ax(t)$ where $A$ is a real matrix and $x(t)$ is a real vector
\footnote{We consider entries to be algebraic so that the input
to a problem has a finite description.}. We are interested in the
decidability of whether from an initial vector $x(0)$, we cross a
given hyperplane. We may consider this as a ``point-to-set''
reachability problem in a dynamical system, see \cite{BT00} for other examples.

Let $\frac{dx(t)}{dt} = Ax(t)$ where
$A \in \R^{n \times n}$ and $x(t) \in \R^n$.
Given the initial vector $x(0) \in \R^n$, then $x(t)$ is given by:
$$
x(t) = \exp(At) \cdot x(0) = \sum_{j = 0}^{\infty} \frac{t^j}{j!}A^j \cdot x(0).
$$

Given a vector $c \in \R^n$ defining a hyperplane, we would like
to determine if there exists some $t \in \PR$ such
that $c^Tx(t) = 0$. In other words, whether the flow of the
point $x(0)$ ever intersects the hyperplane. If such a
$t$ exists, we say that there exists a solution to the instance
\footnote{Note that, in the style of Skolem's problem, we shall be more
interested in determining whether any solution exists, rather
than trying to find an algebraic description of the solution.}.
An instance of \CSP{}{} therefore consists of the matrix
$A \in \R^{n \times n}$, the initial point $x(0) \in \R^n$
and the hyperplane vector $c \in \R^n$.

\subsection{Equivalent Formulations}

To analyze the behaviour of the system, we will convert
a given instance of \CSP{} into various forms which have different
properties but which are essentially equivalent to the original
problem.

Given such an instance, the following lemma shows that
the problem is equivalent to determining if the upper right entry
of the exponential of a matrix equals some constant real. A
similar construction is known in the discrete case as shown in
Section~\ref{intro}.

\begin{thm}\label{altProbVersion}
Given an instance of \CSP{} defined by $f(t) = c^T \exp(At)x(0)$
where $A \in \R^{n \times n}$ and $c,x(0) \in \R^n$. There exists
a polynomial-time computable matrix $B \in \R^{(n+2)\times(n+2)}$
such that $f(t) = \exp(Bt)_{1,n+2} + \lambda$, where
$\lambda = c^Tx(0) \in \R$ is constant.
\end{thm}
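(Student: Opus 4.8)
The plan is to engineer a block matrix $B$ of size $(n+2)\times(n+2)$ whose matrix exponential, read off in the top-right corner, reproduces exactly the scalar $f(t) - \lambda = c^T\exp(At)x(0) - c^Tx(0)$. First I would observe that $c^T\exp(At)x(0)$ is a scalar-valued analytic function of $t$, and I want to realize it as a single entry of $\exp(Bt)$ for a larger matrix. The natural idea is to stack $A$ as an interior block and use one extra row and one extra column so that multiplication by $\exp(Bt)$ first applies $\exp(At)$ to $x(0)$, then contracts against $c^T$. Concretely, set
\[
B = \left(\begin{array}{ccc} 0 & c^T & 0 \\ \overline{0} & A & x(0) \\ 0 & \overline{0} & 0 \end{array}\right),
\]
where the outer blocks are $1\times 1$ zeros and the $\overline{0}$'s are zero vectors of the appropriate shape. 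Because the first column and last row of $B$ are zero, $B$ is block upper triangular in a convenient way; one computes $B^k$ by induction and checks that for $k\geq 1$ the $(1,n+2)$ entry of $B^k$ equals $c^T A^{k-1} x(0)$ (the only way to get from the last coordinate up to the first in $k$ steps is: use the column $x(0)$ once, then $k-1$ applications of $A$, then the row $c^T$ once), while the $(1,n+2)$ entry of $B^0 = I$ is $0$.

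Next I would assemble the exponential. Summing the series,
\[
\exp(Bt)_{1,n+2} = \sum_{k=1}^{\infty} \frac{t^k}{k!}\, c^T A^{k-1} x(0) = c^T\!\left(\sum_{k=1}^{\infty} \frac{t^k}{k!} A^{k-1}\right)\! x(0).
\]
If $A$ is invertible the inner sum is $A^{-1}(\exp(At) - I)$, giving $\exp(Bt)_{1,n+2} = c^T A^{-1}\exp(At)x(0) - c^T A^{-1}x(0)$, which is not quite $f(t)-\lambda$; so the crude realization above produces the antiderivative rather than $f$ itself. The fix is to incorporate $A$ (or rather its action) into the right-hand column so that the extra integration is cancelled: replace the column $x(0)$ by $A x(0)$, exactly mirroring the discrete construction with $A'v$ in the Introduction. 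With
\[
B = \left(\begin{array}{ccc} 0 & c^T & 0 \\ \overline{0} & A & A x(0) \\ 0 & \overline{0} & 0 \end{array}\right),
\]
the same induction now gives $B^k_{1,n+2} = c^T A^{k-1}(A x(0)) = c^T A^k x(0)$ for $k\geq 1$, hence
\[
\exp(Bt)_{1,n+2} = \sum_{k=1}^{\infty}\frac{t^k}{k!}\, c^T A^k x(0) = c^T\exp(At)x(0) - c^Tx(0) = f(t) - \lambda,
\]
which is exactly the claim with $\lambda = c^Tx(0)$. Finally, $B$ is clearly computable from $A$, $c$, $x(0)$ in polynomial time (it is just a rearrangement plus one matrix-vector product $Ax(0)$), so the theorem follows.

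The only genuinely delicate point is the bookkeeping in the induction for $B^k$: one must verify that no ``unwanted'' paths contribute to the $(1,n+2)$ entry — in particular that once the last coordinate is left it is never returned to (true, since the last row is zero) and that the first coordinate is never an intermediate node (true, since the first column is zero). Given the block structure, this is routine; everything else is formal manipulation of the exponential series, which converges absolutely so the interchange of summation with the linear maps $c^T(\cdot)$ and $(\cdot)x(0)$ is justified.
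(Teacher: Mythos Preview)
There is a genuine off-by-one error in your construction. In your second matrix $B$ the top-right entry is $0$, so $(B^1)_{1,n+2}=0$, not $c^TAx(0)$ as you assert. Your own path description shows why: ``use the column $Ax(0)$ once, then some applications of $A$, then the row $c^T$ once'' already accounts for \emph{two} factors before any $A$ is applied, so a path of length $k$ leaves room for only $k-2$ copies of $A$, not $k-1$. The correct induction for your matrix gives $(B^k)_{1,n+2}=c^TA^{k-2}\,(Ax(0))=c^TA^{k-1}x(0)$ for $k\geq 2$, and $0$ for $k=0,1$. Summing the series yields
\[
\exp(Bt)_{1,n+2}=\sum_{k\geq 2}\frac{t^k}{k!}\,c^TA^{k-1}x(0)=\int_0^t\bigl(f(s)-\lambda\bigr)\,ds,
\]
which is the antiderivative of $f-\lambda$, not $f-\lambda$ itself. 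Thus replacing $x(0)$ by $Ax(0)$ removed one integration, but you are still one integration off.

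The repair is to push one more factor of $A$ into the boundary row as well: replace $c^T$ by $c^TA$ in the top middle block and, crucially, put $c^TAx(0)$ (not $0$) in the top-right corner. With
\[
B=\left(\begin{array}{ccc} 0 & c^TA & c^TAx(0)\\ \overline{0} & A & Ax(0)\\ 0 & \overline{0}^T & 0 \end{array}\right)
\]
one now has $(B^k)_{1,n+2}=c^TA^kx(0)$ for every $k\geq 1$ (the case $k=1$ comes directly from the nonzero top-right entry, and the inductive step is as you describe), so the exponential series sums to $f(t)-\lambda$ as required. This is precisely the matrix the paper uses. The rest of your argument---polynomial-time computability of $B$ and the justification of rearranging the absolutely convergent series---is fine.
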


\begin{proof}
We are given the function $f(t) = c^T \exp(At)x(0)$.
Let $B \in \R^{(n+2)\times(n+2)}$ be given by:
$$ B \stackrel{\Delta}{=}
\left(\begin{array}{rrr}
0 & c^TA & c^TAx(0) \\ \bar{0} & A & Ax(0) \\ 0 & \bar{0}^T & 0 \\
\end{array}\right),
$$
where $\bar{0} = (0, 0, \ldots, 0)^T \in \R^{n}$, thus:
$$
\exp(B) =
\left(\begin{array}{rrr}
1 & c^T\exp(A)-c^T & c^T\exp(A)x(0) - \lambda \\
\bar{0} & \exp(A) & \exp(A)x(0)-x(0) \\ 0 & \bar{0}^T & 1 \\
\end{array}\right),
$$
where $\lambda = c^Tx(0)$ is constant.
This can be seen from the power series representation
$\exp(tB) = \sum_{j = 0}^{\infty} \frac{t^j}{j!}B^j$.
Therefore $f(t) = \exp(Bt)_{1,(n+2)} + \lambda$ and thus
an instance of \CSP{} can also be given by a single real matrix
$B$ and the problem of whether $f(t)$ reaches zero for $t \in \PR$
is equivalent to whether $\exp(Bt)_{1,(n+2)}$ ever equals $-\lambda.$
\end{proof}

\begin{thm}\label{expoSumThm}
The following problems are computationally equivalent with
polynomial time reductions (where all \emph{parameters} are algebraic numbers):
\begin{itemize}
\item[(i)] Does there exist a solution to a given instance of \CSP{}?
\item[(ii)] Determine if a real-valued exponential polynomial:
$$
f(t) = \sum_{j=1}^mP_j(t)e^{\theta_jt},
$$
has a nonnegative real zero (where $\theta_j \in \C$ and
$P_j \in \C[X]$).
\item[(iii)] Determine if a function of the form:
$$
f(t) = \sum_{j=1}^m e^{r_j t}(P_{1,j}(t)\cos(\lambda_jt) + P_{2,j}(t)\sin(\lambda_jt))
$$
has a nonnegative real zero (where $r_j, \lambda_j \in \R$ and
$P_{i,j} \in \R[X]$).
\item[(iv)] Determining whether the solution $y(t)$ to an ordinary
differential equation $y^{(k)} + a_{k-1}y^{(k-1)} + \ldots + a_0y = 0$
with the given initial conditions $y^{(k-1)}(0), y^{(k-1)}(0), \ldots, y(0)$
reaches zero for a nonnegative real $t$.
\end{itemize}
\end{thm}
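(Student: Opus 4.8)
The plan is to prove the four-way equivalence by establishing a cycle of polynomial-time reductions, say $(i) \Rightarrow (ii) \Rightarrow (iii) \Rightarrow (iv) \Rightarrow (i)$, exploiting the fact that $c^T\exp(At)x(0)$, exponential polynomials, trigonometric-exponential sums, and solutions of linear ODEs are four different coordinates on essentially the same object. The common thread is the Jordan form of a matrix: if $A = SJS^{-1}$ then $c^T\exp(At)x(0) = (c^TS)\exp(Jt)(S^{-1}x(0))$ expands into a sum $\sum_j P_j(t)e^{\theta_j t}$ where the $\theta_j$ are the eigenvalues of $A$ and the degrees of the $P_j$ are bounded by the sizes of the Jordan blocks. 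The only subtlety here is keeping everything polynomial-time: one should not actually compute the Jordan form numerically, but rather work with the characteristic (or minimal) polynomial and argue that the passage preserves polynomial size in the bit-length of the algebraic parameters.

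First I would do $(i) \Rightarrow (ii)$: given $A, c, x(0)$, the function $f(t) = c^T\exp(At)x(0)$ is exactly an exponential polynomial $\sum_j P_j(t)e^{\theta_j t}$ with $\theta_j \in \sigma(A)$ and $\deg P_j$ bounded by the dimension; the coefficients are obtained by solving a linear system (a Vandermonde-type or confluent Vandermonde system built from $t = 0, 1, \dots$ and derivatives), so they are algebraic and computable in polynomial time. For $(ii) \Rightarrow (iii)$ one simply groups the complex exponents into conjugate pairs: since the original $f$ coming from a real instance is real-valued, the nonreal $\theta_j$ occur in conjugate pairs with conjugate polynomial coefficients, and $P(t)e^{(r+i\lambda)t} + \overline{P(t)}e^{(r-i\lambda)t} = e^{rt}(P_1(t)\cos\lambda t + P_2(t)\sin\lambda t)$ with real $P_1, P_2$; the purely real exponents give the $\lambda_j = 0$ terms. (For the reverse direction, $(iii)$ is literally a special syntactic form of $(ii)$ after rewriting $\cos,\sin$ via Euler's formula, so $(iii)\Rightarrow(ii)$ is immediate, and combined with the cycle this is harmless.) For $(iii) \Rightarrow (iv)$ — or more naturally $(ii)\Rightarrow(iv)$ — I would take the minimal-degree monic polynomial $p(x) = \prod_j (x-\theta_j)^{d_j+1}$ annihilating all the factors $P_j(t)e^{\theta_j t}$ (where $d_j = \deg P_j$); then $y(t) := f(t)$ satisfies the linear ODE $p(D)y = 0$, and the initial conditions $y^{(i)}(0)$ are read off directly and are algebraic. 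Conversely, $(iv) \Rightarrow (i)$: the scalar ODE $y^{(k)} + a_{k-1}y^{(k-1)} + \dots + a_0 y = 0$ is put into companion form $\frac{dz}{dt} = Cz$ with $z = (y, y', \dots, y^{(k-1)})^T$, $z(0)$ given by the initial conditions, and $c = e_1$; this is the standard first-order reduction and is clearly polynomial time.

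The one place where care is genuinely needed — and which I expect to be the main obstacle in writing the argument cleanly rather than mathematically — is the bookkeeping of algebraic numbers and polynomial-time bounds across the Jordan-form step in $(i)\Rightarrow(ii)$ and the factorization step in $(ii)\Rightarrow(iv)$. The eigenvalues of $A$ are algebraic numbers whose minimal polynomials may have degree up to $n$, and one must represent them symbolically (e.g. via their minimal polynomials together with isolating intervals or an algebraic-number-field presentation) and verify that computing the coefficients $P_j$, recognizing conjugate pairs, and forming $p(x)$ all stay polynomial in the input bit-length; in particular multiplicities of eigenvalues and the corresponding Jordan block structure must be handled without ever passing to a representation of exponential size. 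Once the convention that all parameters are algebraic and finitely presented is fixed (as the theorem statement stipulates), each individual reduction is routine, and I would present the cycle $(i)\to(ii)\to(iii)\to(iv)\to(i)$ with the transformations above, remarking that Theorem~\ref{altProbVersion} already illustrates the flavour of the matrix-level manipulation needed in the last step.
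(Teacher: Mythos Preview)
Your proposal is correct and follows essentially the same cycle $(i)\Rightarrow(ii)\Rightarrow(iii)\Rightarrow(iv)\Rightarrow(i)$ as the paper, using the Jordan form for $(i)\Rightarrow(ii)$, Euler's formula for $(ii)\Rightarrow(iii)$, and the companion matrix for $(iv)\Rightarrow(i)$. The only cosmetic difference is in $(iii)\Rightarrow(iv)$: the paper phrases it as ``these functions span a finite-dimensional space closed under differentiation, hence a linear relation among the first $k+1$ derivatives exists,'' whereas you write down the annihilating operator $p(D)=\prod_j(D-\theta_j)^{d_j+1}$ explicitly; these are the same argument, and your extra attention to the algebraic-number bookkeeping is a welcome elaboration of what the paper relegates to a footnote.
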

\begin{proof}
$(i) \Rightarrow (ii)$: Let $J \in \C^{n \times n}$ be the Jordan
matrix for $A$, thus we may write $A = PJP^{-1}$ for some
$P \in GL(n, \C)$.\footnote{These can be effectively found since
we only need \emph{algebraic descriptions} of the Jordan normal form $J$
and the similarity matrix $P$.} Since $\exp(PJP^{-1}) = P\exp(J)P^{-1}$, we
can ask the equivalent problem, does there exist a time
$t \geq 0$ at which:
\begin{eqnarray}
c^T y(t) & = & c^T \exp(tA)y(0) \nonumber \\
 & = & u^T \exp(tJ) v \nonumber = 0, \nonumber
\end{eqnarray}
where $u,v \in \C^n$ are defined by $u^T = c^TP$ and $v = P^{-1}y(0)$?

Let $J = J_1 \oplus J_2 \oplus \ldots \oplus J_m$ be a decomposition of
$J$ into a direct sum of Jordan blocks with $J_i \in \C^{n_i \times n_i}$
and $\sum_{i = 1}^m n_i = n$. Each Jordan block may be written
$J_i = \theta_i I_{n_i} + M_i$ where $\theta_i \in \mathbb{C}$ is the
associated eigenvalue, $I_{n_i} \in \Z^{n_i \times n_i}$ is the
identity matrix and $M_i \in \Z^{n_i \times n_i}$ has $1$ on the
super-diagonal and $0$ elsewhere.

For $1 \leq i \leq m$, we see that $\theta_i I_{n_i}$
and $M_i$ commute and therefore $\exp(tJ_i) = \exp(t\theta_i I_{n_i})\exp(tM_i)$.
The value of $\exp(t\theta_i I_{n_i})$ is $e^{t\theta_i}I_{n_i}$. Let
$\exp(tM_i) = [m_{jk}] \in \mathbb{Q}^{n \times n}$, then
\begin{equation}\label{polyDef}
m_{jk} = \left\{\begin{array}{ll} \frac{t^{(k-j)}}{(k-j)!} &; \textrm{ if } j \leq k\\
0 & ; \textrm{ otherwise } \end{array} \right.
\end{equation}

Therefore we may convert our problem equivalently into deciding
whether there exists a $t \in \PR$ such that $f(t) = 0$ where
$f: \R \to \C$ is defined by:

\begin{equation}\label{expEqn}
f(t) = u^T  \exp(Jt) v = \sum_{j=1}^mP_j(t)e^{\theta_jt},
\end{equation}
and $P_j \in \C[X]$ are polynomials and whose
degree depends upon the size of the corresponding Jordan block and
$\theta_j \in \C$. The polynomials $P_j$ can be derived from
Equation~(\ref{polyDef}). Note that each of these steps is effective
and can be computed in polynomial time for algebraic entries of the
initial matrix $A$.

$(ii) \Rightarrow (iii)$:
This results from Euler's formula for the complex exponential
and the fact that $f(t)$ is a real valued function.

$(iii) \Rightarrow (iv)$: Functions of the type
$$
f(t) = \sum_{j=1}^m e^{r_j t}(P_{1,j}(t)\cos(\lambda_jt) + P_{2,j}(t)\sin(\lambda_jt))
$$
where $r_j, \lambda_j \in \R$ are fixed and $P_{k,j}$ are arbitrary
real polynomials of degree $\leq d_j$ form a real vector space
of dimension $k = 2 \sum_{j = 1}^m(d_j+1).$ This vector space is
closed under differentiation. Hence the first $(k+1)$
derivatives of $f$ are related by
$f^{(k)} + a_{k-1}f^{(k-1)} + \ldots + a_0 f = 0$ where each $a_j$
can be found in polynomial time. By Cauchy's theorem for ordinary
differential equations, a function $f$ is completely determined by the given
relation and the initial conditions
$f^{(k-1)}(0), f^{(k-2)}(0), \ldots, f(0)$.

$(iv) \Rightarrow (i)$: The characteristic equation of the
linear homogeneous differential equation is given by
$z^k + z^{k-1}a_{k-1} + \ldots + a_0 = 0$. It is well known that
we can form the companion matrix of the equation in order to
convert the problem into an instance of \CSP{}. The initial
values are then present in the initial vector $x(0)$.
\end{proof}

\begin{lem}\label{lambdaLem}
Let $A \in \R^{n \times n}$ and $c, x(0) \in \R^n$ form an instance of \CSP{}. 
For any $\lambda \in\mb{C}$ we may form a system
$f_\lambda(t) = u^T \exp(t(A + \lambda I)) v$ where
$u,v \in \mb{C}^n$, $\sigma(A + \lambda I) = \sigma(A)+\lambda$ and $f(t) = 0$
if and only if $f_\lambda(t) = 0$.
\end{lem}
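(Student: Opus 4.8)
The plan is to exploit the fact that a scalar matrix commutes with everything. First I would observe that $A$ and $\lambda I$ commute, so that $\exp(t(A+\lambda I)) = \exp(tA)\exp(t\lambda I) = e^{t\lambda}\exp(tA)$, where the last equality uses $\exp(t\lambda I) = e^{t\lambda} I$. Taking $u = c$ and $v = x(0)$ (so that the vectors $u,v$ promised by the statement may in fact be chosen real when $A,c,x(0)$ are real), this yields $f_\lambda(t) = u^T\exp(t(A+\lambda I))v = e^{t\lambda}\,c^T\exp(tA)x(0) = e^{t\lambda} f(t)$.

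Next, since the complex exponential never vanishes, $e^{t\lambda}\neq 0$ for every $t \in \R$ (indeed for every $t \in \C$), and therefore $f_\lambda(t) = 0$ if and only if $f(t) = 0$, which is the required equivalence of zero sets. For the spectral claim I would argue as follows: if $\mu \in \sigma(A)$ with eigenvector $w$, then $(A+\lambda I)w = (\mu+\lambda)w$, so $\mu+\lambda \in \sigma(A+\lambda I)$; conversely the characteristic polynomial of $A+\lambda I$ is $\det((z-\lambda)I - A)$, whose roots are precisely $\sigma(A)+\lambda$. Hence $\sigma(A+\lambda I) = \sigma(A) + \lambda$.

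There is essentially no obstacle here: the construction is a pure reparametrisation, multiplying the entire trajectory by the nowhere-zero scalar factor $e^{t\lambda}$, which is exactly why it preserves zeros while rigidly translating the spectrum. The only point worth flagging is the reason this lemma is useful at all, namely that it lets one normalise an instance by shifting, say, the dominant eigenvalue onto the imaginary axis (take $\lambda = -\Re(\theta)$ for $\theta$ a dominant eigenvalue) without changing whether a zero is reached, a device that will be convenient in the subsequent case analysis.
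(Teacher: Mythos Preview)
Your proof is correct and follows essentially the same idea as the paper: both establish $f_\lambda(t) = e^{\lambda t} f(t)$ and conclude from the nonvanishing of the exponential. The only cosmetic difference is that the paper reaches this via the ODE viewpoint (setting $y(t)=e^{\lambda t}x(t)$ and checking $y'=(A+\lambda I)y$), whereas you use directly that $\lambda I$ commutes with $A$ so $\exp(t(A+\lambda I))=e^{\lambda t}\exp(tA)$; you also spell out the spectral shift, which the paper leaves implicit.
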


\begin{proof}
Let $\lambda \in \mathbb{C}$ and define $y(t) = e^{\lambda t}x(t)$, thus:
\begin{eqnarray} \frac{dy(t)}{dt} & =
	&\lambda e^{\lambda t} x(t) + e^{\lambda t}\frac{dx(t)}{dt}\nonumber\\
	& = &e^{\lambda t} (\lambda I + A)x(t) \nonumber\\
	& = &(\lambda I + A)y(t) \nonumber
\end{eqnarray}

Define $A_{\lambda} = \lambda I + A$, thus:
$$ y(t) = \exp(tA_{\lambda}) y(0).
$$

Note that there exists $t \geq 0$ such that $c^Tx(t) = 0$ if and
only if $c^Ty(t) = 0$.
\end{proof}

As an example, which will be useful later, let us set
$\lambda = -\textrm{max}\{\Re(\theta)| \theta \in \sigma(A)\}$,
so that all eigenvalues are shifted to the left complex half-plane or
the imaginary axis. This means that we have, in effect, split the set of
eigenvalues into two sets, one which decays exponentially with time and
one which consists of purely imaginary values.

We now remark that any nontrivial solution to the problem
will in fact be \emph{transcendental}.

\begin{thm}\label{transSol}
Given an instance of \CSP{}, all solutions, if any exist,
are transcendental unless the polynomials $P_j(t)$ share a common positive real root.
\end{thm}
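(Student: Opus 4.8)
The plan is to use the equivalent formulation from Theorem \ref{expoSumThm}, namely that a solution is a positive real $t_0$ with $f(t_0) = \sum_{j=1}^m P_j(t_0) e^{\theta_j t_0} = 0$, and to argue by contradiction: suppose $t_0$ is algebraic and not a common root of all the $P_j$. The key tool is the Hermite–Lindemann theorem (Theorem \ref{HLThm}), which asserts that a nonzero sum $\sum_k \alpha_k e^{\lambda_k}$ of exponentials with algebraic coefficients $\alpha_k \neq 0$ and distinct algebraic exponents $\lambda_k$ cannot vanish.

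First I would observe that if $t_0$ is algebraic, then each coefficient $P_j(t_0)$ is algebraic, and each exponent $\theta_j t_0$ is algebraic (a product of algebraic numbers). Next I would group the terms of $f(t_0)$ by the \emph{value} of the exponent: distinct $\theta_j$ need not give distinct $\theta_j t_0$ only in the trivial case $t_0 = 0$, which is excluded since we require $t_0 > 0$; so for $t_0 > 0$ the exponents $\theta_j t_0$ are pairwise distinct. Thus $f(t_0)$ is already in the form required by Hermite–Lindemann, provided every coefficient $P_j(t_0)$ is nonzero. If some coefficient $P_j(t_0)$ vanishes, I simply delete that term; after deleting all vanishing terms, either the sum is empty — which forces $P_j(t_0) = 0$ for every $j$, i.e.\ $t_0$ is a common positive real root of all the $P_j$, the excluded case — or the remaining sum has all coefficients nonzero, distinct algebraic exponents, and hence by Theorem \ref{HLThm} is nonzero, contradicting $f(t_0) = 0$.

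I would also record the cosmetic point that $f$ may a priori be obtained (via the chain in Theorem \ref{expoSumThm}) as a complex-valued function $u^T \exp(Jt) v$, but since the original instance $c^T \exp(At) x(0)$ is real, $f(t)$ is real for real $t$ and the reasoning is unaffected; one works directly with the exponential-polynomial form \eqref{expEqn}. No essential obstacle arises here: the argument is essentially an application of Hermite–Lindemann once the bookkeeping of zero coefficients and the $t_0 > 0$ hypothesis (ensuring distinctness of the scaled exponents) is in place. The only subtlety to state carefully is exactly the escape hatch in the theorem statement — a common positive real root of the $P_j$ — which is precisely the scenario in which $f$ could vanish at an algebraic point for a "degenerate" reason unrelated to transcendence.
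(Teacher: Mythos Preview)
Your proposal is correct and follows essentially the same route as the paper: reduce to the exponential-polynomial form of Theorem~\ref{expoSumThm} and invoke the Hermite--Lindemann theorem (Theorem~\ref{HLThm}). If anything, you are more explicit than the paper about the bookkeeping: you spell out that an algebraic $t_0$ makes both $P_j(t_0)$ and $\theta_j t_0$ algebraic, that $t_0>0$ guarantees the scaled exponents remain distinct, and that one must first discard the terms with $P_j(t_0)=0$ before applying Theorem~\ref{HLThm}, which is exactly what produces the ``common positive real root'' escape clause.
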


\begin{proof}
The corresponding exponential polynomial
formed as in Theorem~\ref{expoSumThm} will be in the form:
$$
f(t) = \sum_{j=1}^mP_j(t)e^{\theta_jt} = 0.
$$
We may assume no $P_j \in \C[X]$ is zero otherwise simply
remove it from the sum and that each $\theta_j$ is distinct,
otherwise group them together. Thus, according to
Theorem~\ref{HLThm} (the Hermite-Lindemann theorem), this
exponential polynomial only has solutions for transcendental
times $t$ where $t \in \PR$.
\end{proof}

\section{Decidable Cases}

We shall now investigate some classes of instances for which \CSP{}
is decidable.

\begin{thm}
The \CSP{} for depth $2$ is decidable.
\end{thm}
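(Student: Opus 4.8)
The plan is to pass to the exponential-polynomial formulation of Theorem~\ref{expoSumThm} (equivalently, to formulation (iv)) and exploit the fact that in depth $2$ the solution is governed by a characteristic polynomial of degree two, so only three regimes can occur. Concretely, I would compute the characteristic polynomial $z^2 + a_1 z + a_0$ of $A$ and branch on the sign of the discriminant $\Delta = a_1^2 - 4a_0$; since $\Delta$ is algebraic this comparison is effective. Throughout, the eigenvalues of $A$ and the scalar coefficients arising from the (symbolic, algebraic) eigendecomposition are algebraic numbers, and every one of the finitely many tests I will perform is a comparison of real algebraic numbers, hence decidable.

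\textbf{Case $\Delta > 0$} (distinct real eigenvalues $\theta_1 \neq \theta_2$): here $f(t) = \alpha e^{\theta_1 t} + \beta e^{\theta_2 t}$ with $\alpha,\beta$ real algebraic. If $\alpha = \beta = 0$ then $f \equiv 0$ and every $t$ is a solution; if exactly one of $\alpha,\beta$ vanishes then $f$ has no zero at all; otherwise $f(t)=0$ is equivalent to $e^{(\theta_1 - \theta_2)t} = -\beta/\alpha$, which has a (necessarily unique) real solution iff $-\beta/\alpha > 0$, and that solution $t^\star = \ln(-\beta/\alpha)/(\theta_1-\theta_2)$ is nonnegative precisely when either $-\beta/\alpha > 1$ and $\theta_1 > \theta_2$, or $-\beta/\alpha < 1$ and $\theta_1 < \theta_2$, or $-\beta/\alpha = 1$ — all decidable by sign comparisons of algebraic numbers, with no need for Baker's theorem. \textbf{Case $\Delta = 0$} (repeated real eigenvalue $\theta$): $f(t) = (\alpha + \beta t)e^{\theta t}$, which vanishes somewhere on $\PR$ iff $\alpha = \beta = 0$, or $\beta \neq 0$ and $-\alpha/\beta \geq 0$. \textbf{Case $\Delta < 0$} (complex conjugate eigenvalues $r \pm i\mu$, $\mu \neq 0$): by formulation (iii), $f(t) = e^{rt}(\alpha\cos\mu t + \beta\sin\mu t) = \sqrt{\alpha^2+\beta^2}\,e^{rt}\cos(\mu t - \phi)$ for real algebraic $\alpha,\beta$; unless $\alpha = \beta = 0$ this changes sign on every interval of length $\pi/|\mu|$, so a nonnegative real zero always exists and the answer is ``yes''.

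The reason depth $2$ is easy — and why the argument does not obviously scale to higher depth — is that a two-term exponential sum collapses to a single transcendental equation solvable in closed form, while the purely oscillatory sub-case is settled by the intermediate value theorem; neither Baker's theorem (Theorem~\ref{TheoBaker}) nor Kronecker's theorem (Theorem~\ref{KronThm}) is needed. The only real care required is the bookkeeping: ensuring that the degenerate situations — a coefficient $\alpha$ or $\beta$ equal to zero, $f$ identically zero, or a zero occurring exactly at $t = 0$ — are all covered, and that each test used (the sign of $\Delta$, the vanishing of $\alpha,\beta$, the comparison of $-\beta/\alpha$ with $0$ and with $1$, and the ordering of $\theta_1,\theta_2$) is a decidable test on algebraic numbers. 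That is the step I would be most careful about, though it is routine rather than a genuine obstacle.
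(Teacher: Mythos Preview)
Your proof is correct and follows essentially the same case analysis as the paper: both split on the eigenvalue structure of $A$ (distinct real, repeated real, complex conjugate pair) and reduce each case to an elementary closed-form or sign test. Your organization via the discriminant and your treatment of the complex case by the intermediate value theorem (rather than exhibiting an explicit zero time) are cosmetic rather than substantive differences, and your handling of the degenerate subcases is, if anything, more explicit than the paper's.
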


\begin{proof}
Assume we have an instance of \CSP{} given by
$f(t) = (c_1, c_2) \exp(At)(x_1,x_2)^T$ with
$A \in \mathbb{R}^{2 \times 2}$. Let
$S \in GL(\C,2)$ put $A$ into Jordan canonical form.
We can rewrite $f(t)=(\alpha_1, \alpha_2) \exp(Jt)(\beta_1,\beta_2)^T,$ where $J = S^{-1}AS$
is a Jordan matrix.

If $A$ has one eigenvalue $\theta$, with algebraic multiplicity
$2$, then $\theta \in \R$. If $\theta$ has geometric multiplicity $1$ then by
Theorem~\ref{expoSumThm} we must solve an equation of the form $(1 + xt)ye^{t\theta}$ where
$x, y \in \R$, thus the instance has a solution if and only if
$-\frac{1}{x} \in \PR$. If $\theta$ has geometric multiplicity $2$ then
we must solve $e^{\theta t}(\alpha_1\beta_1 + \alpha_2\beta_2) = 0$
which has a solution if and only if $(\alpha_1\beta_1 + \alpha_2\beta_2) = 0$.

Otherwise, $J$ is diagonal and we must determine if there
exists a $t \in \PR$ such that
$e^{t\theta_1} + \alpha e^{t\theta_2} = 0$ for $\alpha \in \R$.
Either $\theta_1,\theta_2 \in\R$ or $\theta_1 =\overline{\theta_2}\in \C$.

If $\theta_1,\theta_2 \in\R$ assume without loss of generality
that $\theta_1 < \theta_2$ and we have
$g(t) = e^{t\theta_1} + \alpha e^{t\theta_2}$ thus, by taking logarithms,
$t = \frac{\textrm{ln}(-\alpha)}{\theta_1 - \theta_2}$ is a solution of $g(t) = 0$
and thus there exists a solution if and only if
$\frac{\textrm{ln}(-\alpha)}{\theta_1 - \theta_2} \in \PR$.

In the other case $\theta_1 = \overline{\theta_2} \in \C$.
Since we may therefore shift the real part as allowed by \ref{lambdaLem}, assume
that $\theta_1, \theta_2 \in i\R$. At time
$t = \frac{\pi}{2\Im(\theta_1)}$ we have
$$\begin{array}{rl} e^{t\theta_1} + \alpha e^{t\overline{\theta_2}} & =
e^{\Im(\theta_1)it} + \alpha e^{-\Im(\theta_1)it} \\ &
= \cos(\frac{\pi}{2}) + \alpha \cos(-\frac{\pi}{2}) = 0
\end{array}
$$
which is a solution, thus we are done.
\end{proof}

The following theorem shows that the class of instances where all elements
of the input are nonnegative reals in the continuous
setting is trivially decidable in polynomial time, whereas in the discrete time
case, the problem is NP-hard, as shown in \cite{BP02}. In fact, using
Lemma~\ref{lambdaLem}, we see that in the continuous setting the Skolem-Pisot
problem is polynomially decidable even where the matrix given is a \emph{Metzler matrix},
meaning only off-diagonal elements need be nonnegative.

\begin{thm}
For an instance of \CSP{} given by
$A \in \R^{n \times n}$ and $c, x(0) \in \PR^n$ where $A$ is a Metzler matrix
(thus all off-diagonal elements are nonnegative) and
$f(t) = c^T\exp(At)x(0),$ then we may decide if there exists
a solution in polynomial time.
\end{thm}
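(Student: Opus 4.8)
The plan is to exploit the fact that a Metzler matrix generates a nonnegative semigroup, i.e. $\exp(At)$ has only nonnegative entries for all $t\ge 0$. First I would recall why this holds: writing $A = cI + N$ where $c = \min_i A_{ii}$ is chosen so that $N$ has all entries nonnegative (off-diagonals are nonnegative by hypothesis, and the diagonal of $N$ is $A_{ii}-c\ge 0$), we have $\exp(At) = e^{ct}\exp(Nt)$, and $\exp(Nt) = \sum_{j\ge 0} t^j N^j/j!$ is a sum of nonnegative matrices for $t\ge 0$, hence nonnegative; multiplying by the positive scalar $e^{ct}$ keeps it nonnegative. (One could equally cite Lemma~\ref{lambdaLem} to shift $A$ to a genuinely nonnegative matrix and argue directly on the power series, which is the route hinted at in the statement.)

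Next I would observe that, since $c, x(0) \in \PR^n$ and $\exp(At)$ is entrywise nonnegative for $t\ge 0$, every term in the scalar product $f(t) = c^T\exp(At)x(0) = \sum_{i,k} c_i \exp(At)_{ik} x(0)_k$ is nonnegative. Therefore $f(t)\ge 0$ for all $t\ge 0$, and $f(t)=0$ for some $t\ge 0$ if and only if every summand vanishes at that $t$. In particular $f(0) = c^Tx(0) = \sum_i c_i x(0)_i$; if this is strictly positive then no solution exists, and this is checkable in polynomial time. If $c^Tx(0)=0$, then already $t=0$ is a solution, so the instance is positive. This already settles the question in the generic situation, but I should be careful about the degenerate case where $c^Tx(0)>0$ yet $f(t)$ could still hit zero later — which the nonnegativity of $f$ rules out, so in fact the decision procedure is simply: output ``solution exists'' iff $c^Tx(0)=0$, i.e.\ iff $\sum_i c_ix(0)_i = 0$.

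I would then note the only subtlety worth addressing: the entries of $A$, $c$, $x(0)$ are algebraic numbers (as stipulated in the footnote in Section~\ref{PrelimSec}), so ``$c^Tx(0)=0$'' is a test of whether a specific algebraic number equals zero, which is decidable in polynomial time given the standard encodings of algebraic numbers. Hence the whole procedure runs in polynomial time. The main obstacle, such as it is, is purely expository: one must make sure the nonnegativity-of-$\exp(At)$ argument is stated cleanly (the scalar shift $c=\min_iA_{ii}$ need not be nonnegative, so $e^{ct}>0$ rather than $\ge 0$ is what one uses), and one must explicitly dispatch the would-be worry that $f$ might be positive at $0$ and at $\infty$ but dip to zero in between — impossible here precisely because $f$ is a nonnegative function on $\PR$. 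No deep number theory (Baker, Hermite--Lindemann, Kronecker) is needed; the content is entirely in the sign structure of Metzler flows.
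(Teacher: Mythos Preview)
Your decision procedure (output ``solution exists'' iff $c^Tx(0)=0$) is correct, and your derivation that $\exp(At)$ is entrywise nonnegative is fine. However, there is a genuine gap in the step where you conclude that $f(0)>0$ implies $f(t)>0$ for all $t\ge 0$. You write that ``the nonnegativity of $f$ rules out'' the possibility that $f$ dips to zero, and again that the dip is ``impossible here precisely because $f$ is a nonnegative function on $\PR$''. But nonnegativity of $f$ alone does \emph{not} preclude $f(0)>0$ together with $f(t_0)=0$ for some $t_0>0$: witness $f(t)=(t-1)^2$. You need more than the sign of $f$.

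The paper closes this gap by observing that, once $A$ has been shifted to a genuinely nonnegative matrix, the entries of $\exp(At)=\sum_{j\ge 0}t^jA^j/j!$ are \emph{monotonically nondecreasing} in $t\ge 0$, hence so is $f(t)=c^T\exp(At)x(0)$; thus $f(0)>0$ gives $f(t)\ge f(0)>0$ for all $t\ge 0$. An alternative repair, closer to your own setup, is to note that $f(0)=\sum_i c_ix(0)_i>0$ forces some index $i$ with $c_i>0$ and $x(0)_i>0$, and then the single summand $c_i\,\exp(At)_{ii}\,x(0)_i$ is strictly positive for all $t\ge 0$ (since, in your notation, $\exp(At)_{ii}=e^{ct}\exp(Nt)_{ii}\ge e^{ct}\cdot 1>0$, the $1$ coming from the identity term in the series for $\exp(Nt)$). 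Either argument is short, but one of them must be supplied; the bare appeal to nonnegativity of $f$ is not a valid justification.
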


\begin{proof}
Let $\lambda$ be the minimal diagonal element of $A$. If $\lambda < 0$ then
by Lemma~\ref{lambdaLem}, we may form an equivalent instance
$A' = A + \lambda I$ where $A' \in \PR^{n \times n}$. Thus assume without loss
of generality that $A$ is a nonnegative matrix and $c,x(0)$ are nonnegative vectors.

Note that $\exp(t_2A) > \exp(t_1A)$ for any $t_2 > t_1 \in \PR$
which is a consequence of the power series representation of
$\exp(At) = \sum_{j = 0}^{\infty} \frac{t^j}{j!}A^j$ and the fact
that $A \in \PR^{n \times n}$. We see that $f(0) = c^Tx(0) \in \PR$.
Now, if $f(0) = 0$ then this is a solution, otherwise, since the
matrix exponential increases monotonically componentwise with time
for a nonnegative matrix, there exists no solution.
\end{proof}

In some special cases, some eigenvalues of $A$ do not influence the function $f(t).$
This is easily seen when $A$ is put in its Jordan form $J=P^{-1}AP$: \begin{equation}\label{f(t)-jordan}f(t)=u^T \exp(tJ) v\end{equation}
where $u,v \in \C^n$ are defined by $u^T = c^TP$ and $v = P^{-1}x(0).$  Obviously, if the
entries of $c$ or $x(0)$ corresponding to a particular Jordan block are zero, this block does not
play any role and one may remove it without changing the function $f(t)$.
More generally, from Equation~(\ref{f(t)-jordan}) it is easy, as shown in Theorem~\ref{expoSumThm}, to write the function $f$ as follows:
 \begin{equation}\label{eq-f-red}
f(t) = \sum_{j=1}^mP_j(t)e^{\theta_jt},
 \end{equation} 
where the $\theta_j$ are
the distinct eigenvalues of $A$ and  the $P_j$ are  complex polynomials.   
If no polynomial $P_j(t)$ is identically zero, then we say that the triple $(A,c,x(0))$ is 
\emph{reduced}. If some of the $P_j$ are zero, we can remove the corresponding terms from 
Equation~(\ref{eq-f-red}), since it does not change the value of $f.$

Theorem \ref{expoSumThm} shows how to build an equivalent instance of the form $(A',c',x'(0))$ from 
an instance of the form of Equation~(\ref{eq-f-red}). One would then obtain a reduced instance of the 
\CSP{}. This can be done in a preprocessing phase.

\begin{thm}\label{eigenThm}
Let $\frac{dx(t)}{dt} = A x(t)$ for $A \in \mathbb{R}^{n \times n}$ and
$x(t) \in \mathbb{R}^n$ define an instance of \CSP{} given by
$f(t) = c^T \exp(At)x(0) = 0$. If $(A,c,x(0))$ is reduced and none of the dominant eigenvalues of $A$ are real
then the problem is decidable.
\end{thm}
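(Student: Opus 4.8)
The plan is to prove something stronger than bare decidability: under these hypotheses \emph{every} instance has a solution, so the decision procedure simply outputs ``yes'', and all the work is in verifying that this is always correct.

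First I would bring the instance to the exponential-polynomial form $f(t)=\sum_{j=1}^m P_j(t)e^{\theta_j t}$ with the $\theta_j$ distinct, via Theorem~\ref{expoSumThm}; being \emph{reduced} means no $P_j$ is identically zero. By Lemma~\ref{lambdaLem} applied with the real shift $\lambda=-\max_j\Re(\theta_j)$, I may assume $\Re(\theta_j)\le 0$ for all $j$, with equality exactly at the dominant eigenvalues. Since $A,c,x(0)$ are real, $f$ is real-valued on $\R$, so the dominant eigenvalues occur in conjugate pairs; by hypothesis none of them is real, hence they are nonzero and purely imaginary, say $\pm i\lambda_1,\ldots,\pm i\lambda_r$ with the $\lambda_k>0$ distinct. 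Now split $f=g+h$ into its dominant and recessive parts. The recessive part $h$ is a sum of terms $P_j(t)e^{\theta_j t}$ with $\Re(\theta_j)<0$, so $|h(t)|\le Ce^{-\delta t}\to 0$ for suitable $C,\delta>0$; and the dominant part $g$ is real-valued and, by Euler's formula, equals $\sum_{k=1}^r\bigl(Q_{1,k}(t)\cos\lambda_k t+Q_{2,k}(t)\sin\lambda_k t\bigr)$ with real polynomials $Q_{i,k}$ not all identically zero. This last point is exactly where ``reduced'' is used; without it $g$ could vanish and the statement would fail (a non-reduced representation with dominant terms $0\cdot e^{\pm it}$ and recessive term $e^{-t}$ satisfies the eigenvalue hypothesis yet $f=e^{-t}>0$).

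Let $N$ be the largest degree occurring among the $Q_{i,k}$, and let $\gamma(t)=\sum_k(a_k\cos\lambda_k t+b_k\sin\lambda_k t)$ collect the coefficients of $t^N$; then $\gamma\not\equiv 0$ and $g(t)=t^N\bigl(\gamma(t)+\varepsilon(t)\bigr)$ with $|\varepsilon(t)|\le C'/t$ for $t\ge 1$. The crux is the claim that $\gamma$ attains a strictly positive value and a strictly negative value. Here the hypothesis re-enters: because every frequency $\lambda_k$ is nonzero, $\gamma$ has zero mean value, $M(\gamma)=\lim_{T\to\infty}\frac1T\int_0^T\gamma=0$; and a nonzero real trigonometric polynomial with zero mean cannot be single-signed --- if $\gamma\ge 0$ with $\gamma(t_0)=c>0$, then by almost-periodicity (Weyl equidistribution, or Kronecker's Theorem~\ref{KronThm} after restricting to a maximal rationally independent set of frequencies) the set $\{t:\gamma(t)>c/2\}$ recurs with bounded gaps, forcing $M(\gamma)>0$, a contradiction --- and the same applied to $-\gamma$ gives both signs. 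I expect this step, quantitatively pinning down the recurrence of $\gamma$ so that it is not eventually of one sign, to be the main obstacle; the rest is asymptotic bookkeeping.

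Finally, fix $t_+$ with $\gamma(t_+)=c_+>0$ and $t_-$ with $\gamma(t_-)=c_-<0$. By almost-periodicity of $\gamma$, there are arbitrarily large $t$ with $\gamma(t)>c_+/2$ and arbitrarily large $t$ with $\gamma(t)<c_-/2$; for such $t$, once $t$ is large the terms $t^N\varepsilon(t)$ and $h(t)$ are negligible against $t^N$, so $f(t)=t^N(\gamma(t)+\varepsilon(t))+h(t)$ is positive in the first case and negative in the second. Hence $f$ changes sign at arbitrarily large times, so by the intermediate value theorem it has a zero $t^*\ge 0$: every instance meeting the hypotheses has a solution. (One can also verify the two hypotheses on a given input during a preprocessing phase from the algebraic eigendata, so the overall procedure is effective.)
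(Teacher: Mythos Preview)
Your proposal is correct and follows the same overall strategy as the paper: reduce to an exponential polynomial, shift so the dominant eigenvalues are purely imaginary, split $f=g+h$ into a dominant oscillatory part and an exponentially decaying remainder, and then argue that $g$ takes both signs at arbitrarily large times so that $f$ does too and the intermediate value theorem yields a zero. In particular you reach the same conclusion as the paper, namely that under the stated hypotheses the answer is \emph{always} ``yes'', which is what makes the problem decidable.

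The one place where your route differs from the paper's is the mechanism for showing the dominant part is not eventually of one sign. The paper argues that, once $t$ is past all the polynomial roots, each individual summand $P_{1,j}(t)\cos(\lambda_j t)+P_{2,j}(t)\sin(\lambda_j t)$ oscillates, and then claims (using that the $\lambda_j$ are distinct) that there is enough freedom to find a common time at which \emph{all} summands are positive, and another at which all are negative. You instead extract the coefficient of the highest power $t^N$ to obtain a pure trigonometric polynomial $\gamma(t)=\sum_k(a_k\cos\lambda_k t+b_k\sin\lambda_k t)$, observe that $\gamma$ has mean zero because every $\lambda_k\neq 0$, and use almost-periodicity to conclude that a nonzero almost-periodic function of mean zero must take both signs on a relatively dense set. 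This is a cleaner and more self-contained device: it sidesteps any need to synchronise the signs of the separate summands, and it makes transparent exactly where the hypothesis ``no dominant eigenvalue is real'' (hence no zero frequency, hence zero mean) enters. The paper's version has the minor advantage of staying closer to the original summands, but your mean-value argument is arguably the more robust of the two.
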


\begin{proof}
By Lemma~\ref{lambdaLem}, let us assume all eigenvalues
have real part less than or equal to $0$. Then, using
Theorem~\ref{expoSumThm}, we may consider the system as being
represented by
$$
f(t) = \sum_{j=1}^mP_j(t)e^{\theta_jt}.
$$
We may split this exponential polynomial in two (reordering as necessary)
and write $f(t)=f_1(t)+f_2(t)$, where
\begin{eqnarray}f_1(t)& =&\sum_{j=1}^k P_j(t) \exp(i\lambda_j t) \nonumber\\
& =& \sum_{j=1}^k \left(P_{1,j}(t) \cos{(\lambda_j t)} +
P_{2,j}(t) \sin(\lambda_j t)\right) \label{polyEx1}
\end{eqnarray}
are those terms with $0$ real part and $f_2(t)$ is the summation of the remaining terms.
Equation~(\ref{polyEx1}) follows from Theorem~\ref{expoSumThm} since $f_1(t)$ is
real valued.
Since $(A,c,x(0))$ is reduced, $f_1(t)$ is not identically zero.
Note that $f_2(t)$ tends to zero exponentially fast
as $t$ increases.

For a polynomial $P$ of degree $n$ we may use
Cauchy's bound on the maximum modulus of any polynomial root to determine
that for any root $z \in \C$ of $P(x) = a_{n}x^{n} + \ldots + a_1x + a_0$
we have that:
$$
|z| \leq 1 + \frac{\textrm{max}\{|a_0|,|a_1|, \ldots, |a_{n_j-1}|\}}{|a_{n}|},
$$
as is easy to prove.
Thus define $T \in \PR$ to be strictly greater than this
maximum bound for any $P_{1,j}$ or $P_{2,j}$ in Equation~(\ref{polyEx1})
for $1 \leq j \leq m$ and thus for all $t \geq T$, the sign
of $P_{1,j}(t)$ and $P_{2,j}(t)$ for each $1 \leq j \leq m$ is fixed.

For each $1 \leq j \leq k$ there exists $t_{j, 1},t_{j, 2} > T$ such that
$$P_{1,j}(t_{j,1})\cos(\lambda_jt_{j,1})+P_{2,j}(t_{j,1})\sin(\lambda_jt_{j,1})>0,$$
$$P_{1,j}(t_{j,2})\cos(\lambda_jt_{j,2})+P_{2,j}(t_{j,2})\sin(\lambda_jt_{j,2})<0.$$

Each $\lambda_j$ is distinct thus we have enough freedom in the choice
of these times so that there exists $t_1, t_2 >T$ such that

$$\begin{array}{c}
P_{1,j}(t_1)\cos(\lambda_jt_1)+P_{2,j}(t_1)\sin(\lambda_jt_1)>0 \\
P_{1,j}(t_2)\cos(\lambda_jt_2)+P_{2,j}(t_2)\sin(\lambda_jt_2)<0 \\ \end{array}
; \,\, 1 \leq j \leq k.
$$
We now see that $f_1(t_1)$ is positive and $f_1(t_2)$ is negative thus
there exists a solution since $f_2(t)$ decays exponentially
fast and there exists an infinite number of solution times.
\end{proof}

We now use Theorem~\ref{TheoBaker} (Baker's theorem) to provide bounds on sums of exponentials polynomials.
We first start with some lemmata.

\begin{lem}\label{epsilnLemma2}
Let $\omega_1$ and $\omega_2$ be different algebraic numbers, linearly independent over $\Q$, and 
$e^{i \phi_1}, e^{i \phi_2}$ be algebraic numbers on the unit circle.
There exist effective constants $C, N, T >0$ such that at any time instant $t>T$, either
$1-\cos (\omega_1 t + \phi_1) > C/t^N$ or $1-\cos (\omega_2 t + \phi_2) > C/t^N$.
\end{lem}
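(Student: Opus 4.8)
The plan is to reduce the statement to a single application of Baker's theorem. The key observation is that $1 - \cos\theta$ is small exactly when $\theta$ is close to a multiple of $2\pi$, and in that regime $1-\cos\theta$ behaves like $\tfrac12 \,\mathrm{dist}(\theta, 2\pi\Z)^2$ up to a bounded factor; so it suffices to lower-bound $\mathrm{dist}(\omega_j t + \phi_j,\, 2\pi\Z)$ for at least one $j$. Thus I would first record the elementary inequality: there is a constant $c_0>0$ with $1-\cos\theta \geq c_0\,\mathrm{dist}(\theta, 2\pi\Z)^2$ for all real $\theta$. So the whole lemma follows once I show: there are effective $C', N', T>0$ so that for every $t > T$, either $\mathrm{dist}(\omega_1 t + \phi_1,\, 2\pi\Z) > C'/t^{N'}$ or $\mathrm{dist}(\omega_2 t + \phi_2,\, 2\pi\Z) > C'/t^{N'}$. (I would then set $C = c_0 C'^2$, $N = 2N'$.)

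Next I would express the two "bad" conditions as the simultaneous near-vanishing of two linear forms in logarithms. Saying $\mathrm{dist}(\omega_j t + \phi_j, 2\pi\Z)$ is tiny means there is an integer $k_j$ with $\omega_j t + \phi_j - 2\pi k_j \approx 0$; equivalently, writing $2\pi i = \ln(1) $ carefully via a fixed branch, or better, encoding $e^{i\phi_j}$ and using $\ln$ of algebraic numbers on the unit circle, we get that the quantity $\omega_j t + \phi_j - 2\pi k_j$ is a $\Z$-linear (indeed $\Q$-linear with controlled denominators) combination of $2\pi$, $\phi_1$, $\phi_2$ — which are, after dividing by $i$, logarithms of the algebraic numbers $1$ (with the branch giving $2\pi i$) and $e^{i\phi_1}, e^{i\phi_2}$ — together with the term $\omega_j t$. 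The crucial point is that if \emph{both} $\mathrm{dist}$'s were smaller than $C'/t^{N'}$, then subtracting a suitable rational multiple of one near-relation from the other eliminates the unknown real number $t$ entirely: we would obtain a linear form
$$
\Lambda = \beta_0 + \beta_1 \ln(e^{i\phi_1}) + \beta_2 \ln(e^{i\phi_2}) + \beta_3 (2\pi i)
$$
with algebraic coefficients $\beta_i$ (built from $\omega_1,\omega_2$, which are algebraic, and the integers $k_1,k_2$) and $|\Lambda|$ bounded by something like $C''(|k_1|+|k_2|)/t^{N'}$. Since $|k_j| = O(t)$, this gives $|\Lambda| = O(t^{-N'+1})$.

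Then I would invoke Theorem \ref{TheoBaker}: either $\Lambda = 0$, or $|\Lambda| > h^{-N}$ where $h$ is the height of the $\beta_i$'s and $N$ is a computable constant depending only on the fixed data $\ln(e^{i\phi_1}), \ln(e^{i\phi_2}), 2\pi i$ and the degrees (which are bounded in terms of the fixed degrees of $\omega_1,\omega_2,e^{i\phi_j}$). Since the $\beta_i$ have height polynomial in $|k_1|,|k_2| = O(t)$, we get $|\Lambda| > t^{-N''}$ for an effective $N''$; choosing $N' > N'' + 1$ contradicts $|\Lambda| = O(t^{-N'+1})$ once $t$ exceeds an effective threshold $T$. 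It remains to rule out $\Lambda = 0$: here I would use that $\omega_1, \omega_2$ are linearly independent over $\Q$ (together with the rationality-type structure) to argue that $\Lambda = 0$ forces a $\Q$-linear dependence that cannot hold — if it does hold identically it would pin the "bad" times to an arithmetic progression on which the complementary estimate is trivially available, or it can be excluded outright by the linear-independence hypothesis. I expect the main obstacle to be precisely this bookkeeping: setting up the two near-relations so that eliminating $t$ produces a genuine form in logarithms with algebraic (not merely real) coefficients and correctly tracking that the height grows only polynomially in $t$, so that Baker's bound beats the $O(t^{-N'+1})$ upper bound. The role of the hypothesis "$\omega_1 \neq \omega_2$, linearly independent over $\Q$" is to guarantee the elimination is non-degenerate and that the resulting $\Lambda$ is not forced to be zero for trivial reasons.
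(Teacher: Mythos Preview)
Your proposal is correct and follows essentially the same approach as the paper: reduce to lower-bounding $\mathrm{dist}(\omega_j t + \phi_j, 2\pi\Z)$, assume both are small, eliminate $t$ by an algebraic-coefficient combination (the paper divides each relation by $\omega_j$ and subtracts), and apply Baker's theorem to the resulting linear form in $2\pi i,\, i\phi_1,\, i\phi_2$, using $|k_j|=O(t)$ to bound the heights. Two minor remarks: your ``rational multiple'' should read ``algebraic multiple'' (as you in fact use later), and your explicit handling of the $\Lambda=0$ case via the $\Q$-linear independence of $\omega_1,\omega_2$ is a point the paper leaves implicit.
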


\begin{proof}
We prove that there are $C,N$ such that $|\omega_1 t + \phi_1 - 2k \pi| > C/t^N$ for all integers $k$ 
or $|\omega_2 t + \phi_2 - 2k \pi| > C/t^N$ for all integers $k$.

Indeed, suppose that for some $t, k, l$ and $\epsilon >0$, both $|\omega_1 t + \phi_1 - 2k \pi| \leq \epsilon$ 
and $|\omega_2 t + \phi_2 - 2l \pi| \leq \epsilon$. Then $|t + \phi_1/\omega_1 - 2k \pi/\omega_1| \leq \epsilon/|\omega_1|$ and
 $|t + \phi_2/\omega_2  - 2l \pi/\omega_2 | \leq \epsilon/|\omega_2|$. By difference we find
 $|\phi_1/\omega_1 - \phi_2/\omega_2 + 2l \pi /\omega_2- 2k \pi /\omega_1| < \epsilon(\frac{1}{|\omega_1|}+\frac{1}{|\omega_2|})$. 
Let us introduce $\omega=(\frac{1}{|\omega_1|}+\frac{1}{|\omega_2|})^{-1}$. Then 
$|k \frac{\omega}{i \omega_1}2 \pi i - l \frac{\omega}{i \omega_2} 2 \pi i + \frac{\omega}{i \omega_1} i \phi_1 - \frac{\omega}{i \omega_2} i \phi_2 | < \epsilon$.

 Observing that $2 \pi i, i\phi_1,i\phi_2$ are logarithms of algebraic numbers, we apply Baker's theorem.
Note that the height of $k \alpha$, for any algebraic number  $\alpha$ of degree $d$, is at most $|k|^d$ times
 the height of $\alpha$, from the definition of height.
Thus, $\epsilon > \max{(C_1 |k|^{d}, C_2|l|^{d}, C_0)}^{-N_0}$,
for some $C_0, C_1, C_2, N_0$ not depending on $k,l$. It is also clear that, given $t$,
 $|\omega_1 t + \phi_1 - 2k \pi|$ is closest to zero for some $k < C_3 t$, and similarly for
 $|\omega_2 t + \phi_2 - 2l \pi|$.

This proves that there exists $C',N_0, T$ such that for every $t > T$, either 
$|\omega_1 t + \phi_1 - 2k \pi| > C' t^{-N_0}$ for all $k$, or $|\omega_2 t + \phi_2 - 2l \pi| > C' t^{-N_0}$ 
for all $l$. Since $1- \cos (\alpha+ 2k \pi) >\alpha^2/3$ for $\alpha$ small enough and some $k$ (from 
Taylor approximation), the claim follows.
\end{proof}

We say that a property \emph{$T$-eventually} holds for  a function $g: \R_{\geq 0} \to \R$ if it 
holds for all time instants $t \geq T$. For instance, $g$ is \emph{eventually positive} if 
there is a threshold $T$ such that $g(t)> 0$ for all $t \geq T$.
Clearly, if $f$ is the solution of a linear differential equation, then it has finitely many 
zeros if and only if it is eventually positive or eventually negative.

We say that $g_1$ is \emph{$(T,r)$-exponentially dominated} by $g_2$ if
$|g_1(t)| < e^{-rt} |g_2(t)|$ for $r>0$ and all $t \geq T$.

\begin{lem} \label{lemm_g}
Let us consider $T$-eventually nonzero continuous functions $g_1, \ldots, g_k$ and the function
$f(t)=g_0(t) + \sum_{j=1}^k g_j(t) \cos (\omega_j t + \phi_j)$, where $\omega_1, \omega_2$ are linearly 
independent positive algebraic numbers, $g_0$ is $(T,r)$-exponentially dominated
by $g_1$ and $g_2$,  and $\phi_1, \phi_2$ are angles such that $e^{i\phi_1}, e^{i\phi_2}$ are algebraic.
Then the following $T'$-eventually holds, for some $T'$:
$$   - \sum_{j=1}^k |g_j| <  f < \sum_{j=1}^k |g_j|.$$
Moreover, such $T'$ can be computed as a function of $T, r, \omega_1, \omega_2, \phi_1, \phi_2$.

If all $\omega_j$ are linearly independent over the rationals, then for  any $\epsilon > 0$, 
there exist arbitrarily large times $t$ such that
$$f(t) > (1-\epsilon)\sum_{j=1}^k |g_j(t)|$$
and arbitrarily large times  $t$ such that
$$f(t) < -(1-\epsilon)\sum_{j=1}^k |g_j(t)|.$$
\end{lem}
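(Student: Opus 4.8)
The plan is to prove the two-sided bound $-\sum_{j=1}^k|g_j| < f < \sum_{j=1}^k|g_j|$ first, and then upgrade to the near-tightness statement under the stronger linear-independence hypothesis. For the two-sided bound, the naive estimate $|f(t)| \le |g_0(t)| + \sum_{j=1}^k |g_j(t)|\,|\cos(\omega_j t+\phi_j)| \le |g_0(t)| + \sum_{j=1}^k|g_j(t)|$ is \emph{almost} what we want, but it is not strict because two things can go wrong simultaneously: $\cos(\omega_1 t+\phi_1)$ and $\cos(\omega_2 t+\phi_2)$ could both be very close to $\pm 1$, and $|g_0(t)|$ could momentarily exceed what the strict inequality allows. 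The key device to rule this out is Lemma~\ref{epsilnLemma2}: since $\omega_1,\omega_2$ are linearly independent algebraic numbers and $e^{i\phi_1},e^{i\phi_2}$ are algebraic on the unit circle, there are effective $C,N,T_0$ such that for all $t>T_0$, at least one of $1-\cos(\omega_j t+\phi_j) > C/t^N$ (for $j=1$ or $j=2$). Applying this twice, once with $(\phi_1,\phi_2)$ and once with $(\phi_1+\pi,\phi_2+\pi)$, so that $1+\cos(\omega_j t+\phi_j) = 1-\cos(\omega_j t+\phi_j+\pi)$ is also controlled, we get that for $t$ large, for each of the two ``$+1$'' and ``$-1$'' directions there is an index $j\in\{1,2\}$ whose cosine is bounded away from that extreme by $C/t^N$.

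With this in hand I would argue as follows. Fix $t > T_0$ large. To get $f(t) < \sum_{j=1}^k|g_j(t)|$, write
$$ \sum_{j=1}^k |g_j(t)| - f(t) = -g_0(t) + \sum_{j=1}^k |g_j(t)|\bigl(1 - \mathrm{sgn}(g_j(t))\cos(\omega_j t+\phi_j)\bigr), $$
where every term in the sum is nonnegative. By the application of Lemma~\ref{epsilnLemma2} there is an index $j_0\in\{1,2\}$ with $1 - \mathrm{sgn}(g_{j_0}(t))\cos(\omega_{j_0}t+\phi_{j_0}) > C/t^N$, so the sum is at least $|g_{j_0}(t)|\cdot C/t^N$. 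Since $g_1,g_2$ are $T$-eventually nonzero continuous functions and $g_0$ is $(T,r)$-exponentially dominated by both $g_1$ and $g_2$, we have $|g_0(t)| < e^{-rt}\min(|g_1(t)|,|g_2(t)|) \le e^{-rt}|g_{j_0}(t)|$. Hence
$$ \sum_{j=1}^k |g_j(t)| - f(t) > |g_{j_0}(t)|\bigl(C/t^N - e^{-rt}\bigr), $$
which is strictly positive once $e^{-rt} < C/t^N$, i.e. for all $t$ past an effectively computable threshold $T'$ depending only on $C,N,r$ (hence on $T,r,\omega_1,\omega_2,\phi_1,\phi_2$). The lower bound $f(t) > -\sum_{j=1}^k|g_j(t)|$ is symmetric, using the ``$-1$'' version of Lemma~\ref{epsilnLemma2}; taking the larger of the two thresholds gives the stated $T'$.

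For the second part, assume all $\omega_j$ are linearly independent over $\Q$. Now I want times $t$ where \emph{every} cosine $\cos(\omega_j t+\phi_j)$ is simultaneously close to $\mathrm{sgn}(g_j(t))$; but the signs of the $g_j$ may wobble, so the cleaner route is to aim for $\cos(\omega_j t + \phi_j)$ near $+1$ for all $j$, which forces $\sum_j g_j(t)\cos(\omega_j t+\phi_j)$ near $\sum_j g_j(t)$, and then observe this is not quite what is asked either. Instead I would target, for the ``$f > (1-\epsilon)\sum|g_j|$'' statement, times $t$ at which $\cos(\omega_j t + \phi_j)$ is within $\delta$ of $\mathrm{sgn}(g_j(t))$ for each $j$; since the $g_j$ are eventually nonzero and the real question is whether $\{(\omega_j t)/(2\pi)\}$ can be simultaneously driven near prescribed targets, I apply Theorem~\ref{KronThm} (Kronecker) to the reals $1,\omega_1/(2\pi),\ldots$ — but Kronecker is stated for integer $k$, so more carefully I use its continuous corollary: $\{(t\omega_1,\ldots,t\omega_k) : t\ge 0\}$ is equidistributed mod $2\pi$ on the torus because $\omega_1,\ldots,\omega_k$ are $\Q$-linearly independent, hence the orbit comes within any $\delta$ of any target angle vector infinitely often. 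Choosing the target so each $\cos(\omega_j t+\phi_j)$ exceeds $1-\delta'$ when $g_j(t)>0$ and is below $-1+\delta'$ when $g_j(t)<0$ — which is an open condition on the torus, nonempty for $\delta'$ small — we get, at such $t$, $\sum_j g_j(t)\cos(\omega_j t+\phi_j) > (1-\delta')\sum_j|g_j(t)|$, and absorbing the exponentially small $g_0(t)$ term yields $f(t) > (1-\epsilon)\sum_j|g_j(t)|$ for suitable $\delta'$ and $t$ large. The symmetric choice gives the $<-(1-\epsilon)\sum|g_j|$ statement.

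The main obstacle is the first part: making the inequality \emph{strict} with an \emph{effective} threshold. The issue is genuinely that $\cos$ can get exponentially close to $\pm 1$ along the orbit, so one needs the quantitative Diophantine input of Lemma~\ref{epsilnLemma2} (ultimately Baker's theorem) to guarantee that the ``loss'' $1-\cos$ at \emph{at least one} of the two special frequencies is only polynomially small in $t$, which then beats the exponentially small $g_0$ contribution. A subtlety to be careful about is that one must phrase Lemma~\ref{epsilnLemma2} both for $1-\cos$ and for $1+\cos$ (to handle the sign of $g_j(t)$, which can be either), and that only the frequencies $\omega_1,\omega_2$ — not all of $\omega_1,\ldots,\omega_k$ — carry the Diophantine weight, which is exactly why the hypothesis only demands $\omega_1,\omega_2$ be $\Q$-linearly independent for the first statement.
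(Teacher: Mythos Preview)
Your proposal is correct and follows essentially the same route as the paper: Lemma~\ref{epsilnLemma2} to extract a polynomial gap $C/t^N$ in at least one of the first two cosines, which beats the exponentially small $g_0$ for the strict two-sided bound, and Kronecker's theorem on the torus for the near-tightness statement. Your treatment of the $\pm 1$ cases (applying Lemma~\ref{epsilnLemma2} with phases shifted by $\pi$) is in fact more explicit than the paper's, and your worry about the signs of the $g_j$ ``wobbling'' is unnecessary, since a continuous $T$-eventually nonzero function has a fixed sign on $[T,\infty)$, so the target vertex of $[-1,1]^k$ is fixed.
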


\begin{proof}
It is obvious that $-|g_0| - \sum_{j=1}^k |g_j|<f < |g_0| + \sum_{j=1}^k |g_j|$.

To prove that we can get rid of $g_0$, we exploit the fact that
the cosines $\cos (\omega_1 t + \phi_1)$ and $\cos (\omega_2 t + \phi_2)$ never
 take the value $\pm 1$ exactly at the same time, except possibly once; this is
 a consequence of the linear independence of $\omega_1$ and $\omega_2$. Due to Lemma~\ref{epsilnLemma2}, 
 one of the cosines is $C t^{-N}$ away from $1$, for some $C,N$
 and all large enough times.
Then for all large enough times $t$,
either $|g_0 + g_1 \cos (\omega_1 t + \phi_1)+g_2 \cos (\omega_2 t + \phi_2)| < |g_1| (1-C t^{-N}) + |g_2|$ or 
$|g_0 + g_1 \cos (\omega_1 t + \phi_1)+g_2 \cos (\omega_2 t + \phi_2)| < |g_1|  + |g_2|(1-C t^{-N})$. In any
case, since $g_0$ is exponentially dominated by both $g_1$ and $g_2$, we
have $ |g_0 + g_1 \cos (\omega_1 t + \phi_1)+g_2 \cos (\omega_2 t + \phi_2)| < |g_1| + |g_2|,$ for
some $T'$, computable as a function of $T,r, C, N.$ Adding all the terms $g_j \cos (\omega_j t + \phi_j)$ proves
the first claim of the theorem.

We now prove the second claim. From Kronecker's theorem and the linear independence of frequencies,
we have that the set $\Gamma = \{(\cos (\omega_j t + \phi_j))_{1 \leq j \leq  k}| t \geq 0\}$ is dense in 
$[-1,1]^k$. Hence, $\Gamma$ will  approach all the vertices of  $[-1,1]^k$ by
less than any $\epsilon > 0$ for arbitrarily large times. For those times such that for all $j$, $\cos (\omega_j t + \phi_j)$
is close within $\epsilon/2$ to $\textrm{sign}\,\,\, g_j,$ and  $|g_0(t)/g_1|<\epsilon/2$, 
the first part of the second claim holds. The second part is similar.
\end{proof}

We now prove the main theorem of this section, which says that in some circumstances, one can 
reduce the search for a solution to an instance of \CSP{} to a \emph{finite} time interval.
Recall that an eigenvalue is \emph{nondefective} if its
algebraic and geometric multiplicities coincide. The \emph{frequency}
of an eigenvalue is the absolute value of the imaginary part.
Recall that for a real matrix, complex eigenvalues come in conjugate pairs, determining one equal frequency.
\begin{thm}
Given an instance of \CSP{} where all dominant eigenvalues are nondefective, at least four in number 
and such that the set of their distinct nonzero frequencies is linearly independent over the rationals.

Then
\begin{itemize}
\item The existence of infinitely many solutions is decidable;
\item If there are finitely many solutions, then those solutions are
in $[0,T]$, where $T$ is  computable. 
\end{itemize}
\end{thm}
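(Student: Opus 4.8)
The plan is to normalise $f$ to a bounded almost‑periodic principal part plus an exponentially small tail, and then to settle both items by a single comparison of algebraic numbers. First I would apply Lemma~\ref{lambdaLem} to shift the spectrum so that the dominant eigenvalues have real part exactly $0$, and pass to the reduced form of Theorem~\ref{expoSumThm} (dropping identically–zero polynomials in a preprocessing step); neither step changes the zero set. Write $f=f_1+f_2$, where $f_2$ collects the non‑dominant terms, so that $|f_2(t)|\leq q(t)e^{-\rho t}$ for a computable polynomial $q$ and a computable $\rho>0$ (the spectral gap). Since the dominant eigenvalues are nondefective their Jordan blocks are $1\times1$, so, grouping each conjugate pair $\pm i\lambda_j$ together with the eigenvalue $0$ if it occurs,
$$
f_1(t)=c_0+\sum_{j=1}^{k}A_j\cos(\lambda_j t+\phi_j),
$$
where $c_0$ and the $A_j>0$ are algebraic, the $e^{i\phi_j}$ are algebraic, and $\lambda_1,\ldots,\lambda_k$ are the distinct nonzero dominant frequencies, hence positive, algebraic and linearly independent over $\Q$. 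Because a real matrix with at least four dominant eigenvalues (which occur in conjugate pairs apart from a possible $0$) has at least two distinct nonzero frequencies, $k\geq2$. Put $M=\sum_{j=1}^k A_j>0$.

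The theorem then follows by comparing $|c_0|$ with $M$, a decidable comparison of algebraic numbers, in three cases. If $|c_0|>M$, then $|f_1(t)|\geq|c_0|-M>0$ for all $t$, so after computing $T$ with $q(t)e^{-\rho t}<|c_0|-M$ on $(T,\infty)$ the function $f=f_1+f_2$ keeps a constant sign there; there are finitely many solutions, all in $[0,T]$. If $|c_0|=M$, say $c_0=M$ (replace $f$ by $-f$ otherwise), then $f_1(t)=\sum_j A_j(1+\cos(\lambda_j t+\phi_j))$ is a sum of nonnegative terms, and Lemma~\ref{epsilnLemma2}, applied to $\lambda_1,\lambda_2$ with phases $\phi_1+\pi,\phi_2+\pi$ (legitimate since $k\geq2$ and $-e^{i\phi_1},-e^{i\phi_2}$ are algebraic), yields computable $C,N,T_0$ with $f_1(t)\geq\min(A_1,A_2)\,C\,t^{-N}$ for $t>T_0$; as $q(t)e^{-\rho t}$ is eventually strictly smaller than $\min(A_1,A_2)\,C\,t^{-N}$, one computes $T\geq T_0$ with $|f_2(t)|<f_1(t)$, hence $f(t)>0$, for $t>T$ — again finitely many solutions, all in $[0,T]$. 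If $|c_0|<M$, then $c_0+M>0>c_0-M$, and by Kronecker's theorem (Theorem~\ref{KronThm}) and the $\Q$‑independence of the $\lambda_j$ the curve $t\mapsto(\cos(\lambda_j t+\phi_j))_{1\leq j\leq k}$ comes arbitrarily close to $(1,\ldots,1)$ and to $(-1,\ldots,-1)$ at arbitrarily large times; hence $f_1$ exceeds $c_0+M-\epsilon$ at arbitrarily large times and is below $c_0-M+\epsilon$ at arbitrarily large times, and taking $\epsilon$ a fixed small fraction of $\min(c_0+M,\,M-c_0)$ and using $f_2\to0$ we get that $f$ is positive at arbitrarily large times and negative at arbitrarily large times, so it has infinitely many zeros. (For $c_0=0$ this last step is exactly the second assertion of Lemma~\ref{lemm_g}; for $c_0\neq0$ the constant blocks a direct appeal to that lemma, but the same Kronecker‑density argument applies.)

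This already gives both conclusions. In the first two cases we have decided that there are \emph{not} infinitely many solutions and produced a computable $T$ with every solution in $[0,T]$; since $f$ is a reduced, not‑identically‑zero exponential polynomial it is real‑analytic, so $[0,T]$ contains only finitely many of its zeros in any case. In the third case we have decided that there \emph{are} infinitely many solutions.

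The step I expect to be the crux is the boundary case $|c_0|=M$: one must control how closely the almost‑periodic part $f_1$ can approach $0$, and the elementary remark that $f_1$ merely vanishes to second order at each near‑coincidence of the cosines is too weak. The point is that with two $\Q$‑independent frequencies one cannot have $\cos(\lambda_1 t+\phi_1)$ and $\cos(\lambda_2 t+\phi_2)$ simultaneously near $-1$ with better than polynomially small error, which is precisely the Diophantine input furnished by Baker's theorem through Lemma~\ref{epsilnLemma2}; one then has to verify that the resulting $t^{-N}$ lower bound on $f_1$ still beats the $e^{-\rho t}$ tail $f_2$. A secondary, more bookkeeping difficulty is the degenerate situation in which the reduction collapses the principal part to at most one nonzero frequency (so that $f_1=c_0+A_1\cos(\lambda_1 t+\phi_1)$): here $|c_0|\neq A_1$ is handled exactly as above using periodicity in place of Kronecker, while $|c_0|=A_1$ requires a separate analysis of $f$ near the double zeros of its principal part and of the sign of the tail there, and one must also check that shifting and reducing are compatible with the hypotheses used.
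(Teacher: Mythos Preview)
Your argument is correct and follows essentially the same route as the paper's: shift the dominant eigenvalues to the imaginary axis, split $f$ into a constant, a bounded combination of cosines with linearly independent frequencies, and an exponentially decaying tail, and then decide everything by the algebraic comparison of $|c_0|$ with $M=\sum_j A_j$, invoking Baker via Lemma~\ref{epsilnLemma2} for the boundary case $|c_0|=M$ and Kronecker's theorem for $|c_0|<M$. The only difference is packaging: the paper bundles the Baker lower bound and the Kronecker density into the single Lemma~\ref{lemm_g} and applies it once to $f-\gamma_0$, whereas you unpack the three cases by hand.
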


Note that multiple dominant eigenvalues are allowed.

\begin{proof}
As allowed by Lemma~\ref{lambdaLem}, we can suppose without
loss of generality that the dominant eigenvalues are on the imaginary axis.

Then we are looking for real zeros of a function
$f(t)=\gamma_0 + f_1(t)+f_2(t)$, where $\gamma_0$ is the contribution of the dominant zero eigenvalue (if any),
\begin{eqnarray}f_1(t)& =&\sum_{j=1}^k z_j \exp(i\lambda_j t) \nonumber\\
& =& \sum_{j=1}^k \alpha_j \cos{(\lambda_j t)}+\beta_j \sin{(\lambda_j t)}  \nonumber
\end{eqnarray}
collects the dominant terms corresponding to dominant complex eigenvalues $\theta_j= i \lambda_j$ and $f_2(t)$ is exponentially
decreasing. By elementary trigonometric manipulations, $f_1$ can be converted into

\begin{eqnarray}f_1(t)& =& \sum_{j=1}^k \gamma_j \cos{(\lambda_j t + \phi_j)}, \nonumber
\end{eqnarray}
for some $\phi_j$ such that $\exp{(i \phi_j)}$ is algebraic. Hence $f_1$ is a linear combination of
shifted cosines.

Since there are at least four distinct dominant eigenvalues, $f_1$ contains at least two different frequencies.
We apply Lemma \ref{lemm_g}, with $g_i=\gamma_{i}, g_{0}=f_2$ to obtain that the following eventually holds:
$$-\sum_{j=1}^k |\gamma_j| < f-\gamma_0 < \sum_{j=1}^k |\gamma_j|.$$

Moreover, the same lemma tells us that for any $\epsilon > 0$ there are arbitrarily large times $t$ such that:
$$-(1-\epsilon)\sum_{j=1}^k |\gamma_j| \geq f(t)-\gamma_0$$ and  arbitrarily large times $t$ such that:
$$f(t)-\gamma_0 \geq (1-\epsilon)\sum_{j=1}^k |\gamma_j|.$$

As mentioned above, $f$ has finitely many zeros if and only if $f$ is eventually positive or 
eventually negative. It results from the above that $f  < 0$ ($T$-eventually, for some $T$)  
if and only if $\gamma_0 + \sum_{j=1}^k |\gamma_j| \leq 0$. Moreover, when $f < 0$ ($T$-eventually, 
for some $T$), such a $T$ can be be computed. A similar argument holds for $f>0$.
This proves the claim.
\end{proof}

Note that in discrete time, checking the existence of a zero in a finite time interval is a 
trivial task, while in continuous time we do not know how to decide the existence of a zero
between time $0$ and $T$.

\section{NP-Hardness of Nonnegativity Problem}
We now prove the continuous version of Blondel-Portier's result \cite{BP02}.

\begin{thm}
The nonnegativity problem for instances of \CSP{} given by a skew-symmetric
matrix is NP-hard and decidable in exponential time. In particular, the
general nonnegativity problem is NP-hard.
\end{thm}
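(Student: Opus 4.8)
The plan is to combine a normalisation of skew-symmetric instances into trigonometric polynomials, a decision procedure based on equidistribution and real quantifier elimination for the exponential-time upper bound, and an adaptation of the Blondel--Portier reduction \cite{BP02} for the lower bound.

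\emph{Normalisation.} A real skew-symmetric matrix $A$ is normal, diagonalisable, with all eigenvalues on the imaginary axis, say $\pm i\lambda_1,\dots,\pm i\lambda_r$ together with possibly a zero eigenvalue, the $\lambda_j>0$ algebraic; hence by Theorem~\ref{expoSumThm} the function $f(t)=c^T\exp(At)x(0)$ is a genuine real trigonometric polynomial $f(t)=\gamma_0+\sum_{j=1}^r\gamma_j\cos(\lambda_j t+\phi_j)$, with no polynomial factors and $e^{i\phi_j}$ algebraic. Conversely, any trigonometric polynomial of this shape is realised, in polynomial time, by a skew-symmetric instance: take $A$ block-diagonal with $2\times 2$ blocks having rows $(0,\lambda_j)$ and $(-\lambda_j,0)$ and choose $c,x(0)$ so as to read off the $j$-th rotation block. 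Thus, up to polynomial-time reductions, the nonnegativity problem for skew-symmetric instances is exactly: \emph{is a succinctly presented real trigonometric polynomial $f$ nonnegative on $\PR$?}

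\emph{Exponential-time decision procedure.} First compute a $\Z$-basis $\mu_1,\dots,\mu_d$, $d\le r$, of the additive group generated by $\lambda_1,\dots,\lambda_r$, so that $\lambda_j=\sum_{\ell=1}^d n_{j\ell}\mu_\ell$ with $n_{j\ell}\in\Z$ of polynomial bit-length (an integer-relation basis of polynomial size among algebraic numbers of bounded height and degree can be found effectively). The $\mu_\ell$ are algebraic and $\Q$-linearly independent, and a nonzero algebraic number is never a nonzero rational multiple of $2\pi$ since $\pi$ is transcendental; hence, by a standard equidistribution argument (the continuous-time analogue of Kronecker's theorem, cf.\ Theorem~\ref{KronThm}), the image of the ray $\{t\ge 0\}$ under $t\mapsto(\mu_1t,\dots,\mu_dt)\bmod 2\pi$ is dense in the torus $(\R/2\pi\Z)^d$. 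By continuity of the $d$-variate trigonometric polynomial $F(\theta_1,\dots,\theta_d)=\gamma_0+\sum_j\gamma_j\cos\bigl(\sum_\ell n_{j\ell}\theta_\ell+\phi_j\bigr)$ and compactness of the torus, $\inf_{t\ge 0}f(t)=\min_{\theta\in[0,2\pi]^d}F(\theta)$, so $f\ge 0$ on $\PR$ iff $F\ge 0$ on the torus. Decide the latter by quantifier elimination over the reals: set $u_\ell=\cos\theta_\ell$, $v_\ell=\sin\theta_\ell$ with $u_\ell^2+v_\ell^2=1$ and, to avoid a blow-up from the multiple angles, express each $\cos(\sum_\ell n_{j\ell}\theta_\ell)$ via $O(\log|n_{j\ell}|)$ auxiliary variables obtained by repeated squaring of $u_\ell+iv_\ell$ (degree-$2$ relations). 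This yields a first-order sentence over $\R$ with polynomially many variables and polynomial degree, decidable in time exponential in the input size (Tarski--Seidenberg; Renegar), which gives the stated bound.

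\emph{NP-hardness.} This is the continuous counterpart of Blondel--Portier's reduction \cite{BP02}. We reduce from \textsc{Subset-Sum}: given $a_1,\dots,a_n\in\N$ (in binary) and $S\in\N$, is $\sum_{i\in I}a_i=S$ for some $I\subseteq\{1,\dots,n\}$? From such an instance one builds, in polynomial time, a skew-symmetric instance whose trigonometric polynomial $f$ attains a negative value at some $t\ge 0$ if the instance is solvable, and is nonnegative on $\PR$ otherwise; as \textsc{Subset-Sum} is NP-complete this establishes NP-hardness of the nonnegativity problem. Two points require care. (i) The construction must be polynomial-size even though the $a_i$ are written in binary, so one cannot form the product $\prod_i(1+e^{ia_it})$ directly --- it would contribute exponentially many frequencies, hence an exponentially large skew-symmetric matrix; instead, exactly as in the discrete setting, the $2^n$ subsets are compressed by Chinese remaindering / a radix (positional) encoding into only polynomially many frequencies and coefficients, i.e.\ onto a torus of dimension $O(\mathrm{poly}(n))$. (ii) One must pass from the intended behaviour at the ``aligned'' time instants to \emph{all} real $t\ge 0$, certifying that $f$ does not change sign at the intermediate times; this is done by a separation (gap) estimate of Baker type (Theorem~\ref{TheoBaker}) bounding from below the nonzero values the putative witness could take. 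Skew-symmetry comes for free: the instance is assembled from rotations, so all eigenvalues lie on the imaginary axis (Lemma~\ref{lambdaLem} may also be invoked to place them there). I expect step (ii) --- making the continuous-time statement faithfully mirror the combinatorial one with fully effective constants --- to be the main obstacle, the size bookkeeping of (i) being routine once the discrete template is fixed.
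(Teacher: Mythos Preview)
Your exponential-time upper bound is essentially the paper's argument: pass to a rational basis for the frequencies, use Kronecker density to identify $\overline{\{f(t):t\ge 0\}}$ with the range of a polynomial on a product of circles, and decide nonnegativity of that polynomial by real quantifier elimination. That part is fine and matches the paper.

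The NP-hardness half, however, diverges from the paper and is genuinely incomplete. You try to port the Blondel--Portier \textsc{Subset-Sum} reduction to continuous time and correctly isolate the difficulty: in the discrete setting one only needs the sequence to behave correctly at integer times, whereas here, in the ``no'' case, $f$ must be nonnegative at \emph{every} real $t\ge 0$, including all the intermediate instants where the trigonometric polynomial is free to oscillate. Your proposed fix via Baker-type gap estimates is not carried out, and it is not clear how a lower bound on a linear form in logarithms would force $f(t)\ge 0$ globally rather than merely bound $|f|$ away from zero at isolated instants. You flag this yourself as ``the main obstacle'', and the proposal does not close it; as written this is a gap, not a proof.

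The paper sidesteps the whole issue by reducing from a different NP-hard problem: nonnegativity of a multivariate polynomial $P(x_1,\dots,x_k)$ on $[-1,1]^k$ (NP-hard via a standard 3-SAT encoding, hence with bounded degree and polynomially many monomials). One sets $x_i=\cos(\xi_i t)$ for rationally independent algebraic $\xi_i$ and expands each monomial into a linear combination of cosines by the product-to-sum identities, obtaining a trigonometric polynomial $f(t)$ of polynomial size. Now the \emph{same} Kronecker density argument you already used for the upper bound gives $\overline{\{f(t):t\ge 0\}}=P([-1,1]^k)$, so $f\ge 0$ on $\PR$ iff $P\ge 0$ on $[-1,1]^k$. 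No Baker, no Chinese remaindering, no control of intermediate times: density handles both directions symmetrically, and the reduction is immediate.
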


\begin{proof}
A skew symmetric matrix has only imaginary eigenvalues and Jordan
blocks of size one. By Theorem~\ref{expoSumThm} we must find nonnegative
real zeros of a function of the form
$$f(t)=\sum_i \alpha_i \cos(\lambda_i t)+ \beta_i \sin(\lambda_i t).$$

We can, in polynomial time, find a basis $\xi_1, \ldots, \xi_m$ over the
rationals for the family $\lambda_1,\ldots, \lambda_k$, such that every
$\lambda_i$ is an integral combination of $\xi_1, \ldots, \xi_m$. For
every $\xi_i$ we introduce two variables $x_i = \cos(\xi_i t)$ and
$y_i= \sin(\xi_i t)$, which satisfy  $x_i^2+y_i^2=1$. Hence $f(t)$ is a
polynomial $P$ in $x_i, y_i$ (by elementary trigonometry). From
Theorem~\ref{KronThm} (Kronecker's theorem), the trajectory
$(\xi_1 t, \ldots, \xi_k t)$ is dense in $[0, 2\pi]^k$, from which
$\overline{\{f(t)|t \in \R\}}=\{P(x_1,y_1, \ldots, x_m, y_m)|x_i,y_i \in \R \}$
follows. Hence, $f$ is nonnegative if and only if $P$ is,
when taken over the set $\{x_1, y_1, \ldots, x_m, y_m | x_j,y_j \in\R \textrm{ and }
x_j^2 + y_j^2 = 1 \textrm{ for } 1\leq j \leq m\}$. This problem
is solvable in time exponential in $m$ by Tarski's procedure
(see for example \cite{BPR96}).

Suppose we are given a polynomial
$P(x_1, \ldots, x_k)$. We write $x_i=\cos(\xi_i t)$ for every $1 \leq i \leq k$.
Every monomial of $P$ can therefore be written as a linear combination of
cosines by elementary trigonometry. For instance,
$x_1x_2=\cos \xi_1 t \cos \xi_2 t = \frac{\cos(\xi_1 -\xi_2) t
+\cos (\xi_1 +\xi_2) t }{2}$, and so on. In this way, the polynomial
$P(x_1, \ldots, x_k)$  can be written as a function
$f(t)= \sum_i \alpha_i \cos (\lambda_i t)$, such that
$\overline{\{f(t) | t \in \R\}}=\{P(x_1, \ldots, x_k) |  x_i \in [-1,1] \}$.
Hence $f$ is nonnegative if and only if $P$ is nonnegative
on $[-1,1]^k$. Since checking the nonnegativity of a polynomial on
$[-1,1]^k$ is NP-hard (which is easily proved via an encoding of the
3-SAT problem, see, e.g., \cite{GJ79}), then the nonnegativity problem for instances of \CSP{}
is also NP-hard.
\end{proof}

Note that physical linear systems that preserve energy can often be
modelled by differential equations with a skew-symmetric matrix, because these
are precisely, up to a change of variables, the systems for which the energy $1/2 x^T x$
(where $x$ is the state) is constant along the trajectories,
(see, e.g., \cite{Wi72}). This case is therefore particularly relevant.

\section{Conclusion}

In studying this problem, we are not so much interested in
exactly describing the solutions to the problem, as determining the
\emph{existence} of solutions. For example, if we have algebraic times
$t_1, t_2 \in \PR$ with $t_1 < t_2$ such that $f(t_1)$ and $f(t_2)$ have
different signs then there exists $t \in [t_1, t_2]$ such that $f(t) = 0$
by the intermediate value theorem.

The main problem encountered in solving \CSP{} however appears to be that
$f(t)$ can reach $0$ tangentially,
i.e. we may have a solution $f(t) = 0$ where there exists
$\varepsilon > 0$ such that $f(\tau) \geq 0$ for all
$\tau \in [t - \varepsilon, t + \varepsilon]$. Since, by
Lemma~\ref{transSol}, the solution will, for non trivial cases,
be transcendental, it is difficult to determine when such a
situation arises. Indeed, given a real valued exponential
polynomial, if we take its square then it is positive real
valued and reaches zero tangentially if and only if the
first exponential polynomial had a zero.

We have therefore attempted to show several instances in
which the problem is decidable but the general problem remains open.
The equivalent problem of determining if an exponential polynomial
has real zeros seems equally interesting.
It is surprising that the problem is open even for a
finite time interval. Solving Skolem's problem in the discrete case
for finite time is obviously decidable since we can simply compute
the values in the interval.

\begin{open}
Is \textsc{Bounded Continuous Skolem's Problem} decidable? I.e. Given a
fixed $T \in \PR$, and an instance of \CSP{},
$f(t) = c^T \exp(At)x(0)$, does there exist $t \leq T$ such that $f(t) = 0$?
\end{open}

We also showed that the nonnegativity problem is NP-hard in the continuous
case. It is not clear if a similar technique can be used to show that \CSP{}
is also NP-hard. In the discrete Skolem's problem it turns out that determining the
nonnegativity and positivity of a linear recurrent sequence are
equivalent in terms of complexity, however this is not clear in
the continuous case.

\begin{open}
Are \CSP{} and the continuous nonnegativity problem
computationally equivalent?
\end{open}

\section{Acknowledgements}
This article presents research results of the Belgian Programme on Interuniversity
Attraction Poles, initiated by the Belgian Federal Science Policy Office.
This research has been also supported by the ARC (Concerted Research Action)
``Large Graphs and Networks'', of the French Community of Belgium. The scientific
responsibility rests with the authors. J.-C. Delvenne and R. Jungers hold
FNRS fellowships (Belgian Fund for Scientific Research).

We would like to thank Alexandre Megretski for a useful discussion on this problem.
We also would like to thank the two reviewers for their careful reading of this manuscript and
helpful comments and suggestions.


\begin{thebibliography}{00}

\bibitem{Apos}  T. Apostol, \emph{Liouville's Approximation Theorem}, in: Modular Functions and 
Dirichlet Series in Number Theory, 2nd ed. New York: Springer-Verlag, pp. 146-148, 1997.

\bibitem{BBC}  L. Babai, R. Beals, J. Cai, G. Ivanyos and
E. M. Luks, \emph{Multiplicative Equations over Commuting Matrices},
Proc. 7th ACM-SIAM Symp. on Discrete Algorithms , pp. 498-507, 1996.

\bibitem{Bk66} A. Baker, \emph{Linear Forms in the Logarithms of Algebraic Numbers} I-IV, 
Mathematika 13 (1966), 204-216; 14 (1967), 102-107 et 220-224; 15 (1968), 204-216.

\bibitem{BPR96} S. Basu, R. Pollack, M. Roy, \emph{On the Combinatorial
and Algebraic Complexity of Quantifier Elimination}. J. ACM 43, 6,
1002-1045, 1996.

\bibitem{BM} J. Berstel, M. Mignotte, \emph{Deux Propri\'{e}t\'{e}s
D\'{e}cidables des Suites R\'{e}currentes Lin\'{e}aires}, Bull. Soc. Math.
France, 104, 175-184, 1976.

\bibitem{BP02} V. Blondel, N. Portier, \emph{The Presence of a Zero in an
Integer Linear Recurrent Sequence is NP-hard to Decide}, Linear Algebra and
its Applications, Elsevier, 351-352, pp. 91-98, 2002.

\bibitem{BT00} V. D. Blondel, J. N. Tsitsiklis, \emph{A Survey of Computational
Complexity Results in Systems and Control}, Automatica, Elsevier, 36:9, pp.
1249-1274, 2000.

\bibitem{Ca57} J. Cassels, \emph{An Introduction to Diophantine
Approximation}, Cambridge Univ. Press, 1957.

\bibitem{Ev03} G. Everest, A. van der Poorten, I. Shparlinski, T. Ward,
\emph{Recurrence Sequences}, AMS Surveys and Monographs, Volume 104, ISBN 0-8218-3387-1, 2003.

\bibitem{GJ79} M.R. Garey, D.S. Johnson, \emph{Computers and Intractability: A Guide to the Theory
of NP-Completeness}, New York: W.H. Freeman. ISBN 0-7167-1045-5, 1979.

\bibitem{Ha08} E. Hainry, \emph{Reachability in Linear Dynamical Systems},
Computability in Europe 2008, Lecture Notes in Computer Science, LNCS 5028, Springer, 2008.

\bibitem{HHH} V. Halava, T. Harju, M. Hirvensalo, \emph{Positivity of
Second Order Linear Recurrent Sequences}, Discrete Applied Math.
154, Elsevier, pp. 447-451, 2006.

\bibitem{HHHK} V. Halava, T. Harju, M. Hirvensalo, J. Karhum\"aki,
\emph{Skolem's Problem - On the Border between Decidability and Undecidability},
TUCS Technical Report Number $683$, 2005.

\bibitem{HH07} V. Halava, M. Hirvensalo, \emph{Improved Matrix Pair
Undecidability Results}, Acta Informatica 44, 3:191-205, 2007.

\bibitem{Ka43} M. Kac, \emph{On the Distribution of Values of
Trigonometric Polynomials with Linearly Independent Frequencies},
Am. J. Math., 65, 609-615, 1943.

\bibitem{KL86} R. Kannan, R. Lipton, \emph{Polynomial-Time Algorithm
for the Orbit Problem}, Journal of the Association for Computing Machinery,
Volume 33, Issue 4, 808 - 821, 1986.

\bibitem{Le} C. Lech \emph{A Note on Recurring Sequences} Ark. Mat. 2,
417-421, 1953.

\bibitem{Ma} K. Mahler, \emph{Eine Arithmetische Eigenschaft der
Taylor-Koeffizienten Rationaler Funktionen}, Proc. Akad. Wet.,
Amsterdam 38, 50-60, 1935.

\bibitem{Mo73} C. J. Moreno, \emph{The Zeros of Exponential Polynomials},
Comp. Math., vol. 26, 69-78, 1973.

\bibitem{NS82} J. D. Nulton, K. B. Stolarsky, \emph{The First Sign Change
of a Cosine Polynomial}, Proc. of the Amer. Math. Soc., 84, No. 1, 55-59, 1982.

\bibitem{Po} G. P\'{o}lya, \emph{Geometrisches \"{u}ber die Verteilung
der Nullstellen gewisser ganzer Transzendenter Funktionen},
{M}\"{u}nch. {S}itzungsber., 50, 285-290, 1920.


\bibitem{Sch25} E. Schwengler, \emph{Geometrisches \"uber die Verteilung
der Nullstellen Spezieller Ganzer Funktionen}, Dissertation, ETH Z\"urich, 1925.

\bibitem{Sk} T. Skolem, \emph{Ein Verfahren zur Behandlung gewisser
exponentialer Gleichungen und diophantischer Gleichungen}, 8. Skand.
Mat. Kongr., Stockholm, 163-188, 1934.

\bibitem{Ta25} J. Tamarkin, \emph{Some General Problems of the Theory of
Ordinary Linear Differential Equations and Expansion of an Arbitrary
Function in Series of Fundamental Functions}, Mathematische Zeitschrift, 27,
Num. 1, 1-54, 1925.

\bibitem{vdP75} A. J. van der Poorten, \emph{A Note on the Zeros of
Exponential Polynomials}, Compositio Mathematica, 31 no. 2, 109-113, 1975.

\bibitem{V85} N. Vereshchagin, \emph{Occurrence of Zero in a
Linear Recursive Sequence}, Math. Notes, 38, 2:609-615, 1985.

\bibitem{Vo76} M. Voorhoeve, \emph{On the Oscillation of Exponential
Polynomials} Mat. Z., 151, No. 3, 217-293, 1976.

\bibitem{Wi17} C. E. Wilder, \emph{Expansion Problems of Ordinary Linear
Differential Equations with Auxiliary Conditions at More than Two Points},
Trans. of Math. Soc., 18, 415-442, 1917.

\bibitem{Wi72} J.C. Willems, \emph{Dissipative Dynamical Systems - Part II: Linear
Systems with Quadratic Supply Rates}, Archive for Rational Mechanics and Analysis
Vol. 45, 352-393, 1972.

\end{thebibliography}
\end{document}